\newcommand{\R}{\ensuremath{\mathbb{R}}}
\def\p{\partial}
\newtheorem {theorem} {Theorem}
\newtheorem {lemma}[theorem] {Lemma}
\title[Number of normally hyperbolic limit tori in $3$D polynomial vector field]{Strict increase in the number of normally hyperbolic limit tori in $3$D polynomial vector fields}
\author[ L. Q. Arakaki]{ Lucas Q. Arakaki$^*$}
\address{Departamento de Matem\'{a}tica, Instituto de Matem\'{a}tica, Estatística e Computa\c{c}\~{a}o Cient\'{i}fica (IMECC), Universidade
	Estadual de Campinas (UNICAMP), Rua S\'{e}rgio Buarque de Holanda, 651, Cidade Universit\'{a}ria Zeferino Vaz, 13083--859, Campinas, SP, Brazil.} 
\email{larakaki@unicamp.br}
\author[D. D. Novaes]{Douglas D. Novaes}
\address{Departamento de Matem\'{a}tica, Instituto de Matem\'{a}tica, Estatística e Computa\c{c}\~{a}o Cient\'{i}fica (IMECC), Universidade
	Estadual de Campinas (UNICAMP), Rua S\'{e}rgio Buarque de Holanda, 651, Cidade Universit\'{a}ria Zeferino Vaz, 13083--859, Campinas, SP, Brazil.} \email{ddnovaes@unicamp.br} 
\thanks{$^*$ Corresponding author.}
\subjclass[2020]{34C23, 34C29, 34C45}
\keywords{3D polynomial vector fields, limit tori, normal hyperbolicity, Hopf-Zero bifurcation}
\begin{document}

\begin{abstract}
The second part of Hilbert's 16th problem concerns determining the maximum number $H(m)$ of limit cycles that a planar polynomial vector field of degree $m$ can exhibit. A natural extension to the three-dimensional space is to study the maximum number $N(m)$ of limit tori that can occur in spatial polynomial vector fields of degree $m$. In this work, we focus on normally hyperbolic limit tori and show that the corresponding maximum number $N_h(m)$, if finite, increases strictly with $m$. More precisely, we prove that $N_h(m+1) \geqslant N_h(m) + 1$. Our proof relies on the torus bifurcation phenomenon observed in spatial vector fields near Hopf-Zero equilibria. While conditions for such bifurcations are typically expressed in terms of higher-order normal form coefficients, we derive explicit and verifiable criteria for the occurrence of a torus bifurcation assuming only that the linear part of the unperturbed vector field is in Jordan normal form. This approach circumvents the need for intricate computations involving higher-order normal forms.
\end{abstract}

\maketitle

\section{Introduction and statement of the main results}

The existence of compact invariant manifolds lies at the heart of the qualitative theory of differential systems, as it provides essential insights into their underlying dynamical structure. In planar differential systems, periodic solutions are the first nontrivial examples of compact invariant manifolds and have been widely studied. In higher dimensions, invariant tori play a role similar to that of limit cycles in the plane, allowing for natural extensions of classical questions concerning their existence, number, and stability.

In \cite{NP25}, the authors proposed, as an extension of Hilbert's 16th problem to three-dimensional space, the study of the maximal number $N(m)$ of isolated invariant tori, which we henceforth refer to as \emph{limit tori}, that can occur in polynomial vector fields of degree $m$ in $\mathbb{R}^3$. More precisely, given a vector field $X$ associated to the following polynomial differential system
\begin{equation*}
	\begin{aligned}
		\dot{x} &= P(x,y,z),\\
		\dot{y} &= Q(x,y,z),\\
		\dot{z} &= R(x,y,z),
	\end{aligned}
\end{equation*}
let $\tau(P,Q,R)$ denotes the number of limit tori in its phase space. Then,
\[
N(m) = \sup\{ \tau(P,Q,R) : \deg(P), \deg(Q), \deg(R) \leqslant m \}.
\]
When analyzing invariant compact manifolds in three-dimensional differential systems, the notion of \emph{normal hyperbolicity} plays a fundamental role (see, for instance, \cite{Fenichel1,W94}). One of its key features is that, roughly speaking, normally hyperbolic invariant manifolds persist under small perturbations. In this context, normally hyperbolic limit tori emerge naturally as three-dimensional analogues of hyperbolic limit cycles in planar differential systems. This motivates the definition of
\[
N_h(m) = \sup\{ \tau_h(P,Q,R) : \deg(P), \deg(Q), \deg(R) \leqslant m \},
\]
where $\tau_h(P,Q,R)$ denotes the number of normally hyperbolic limit tori of the vector field $X$.

The bifurcation of limit tori has attracted considerable attention in recent studies. In \cite{CanNov20JDE,Novaes2023,PereiraNovaes23mech}, tools based on \emph{averaging theory} were developed to investigate the existence of such invariant objects. These methods have been employed to study applied models in \cite{CanNov20JDE,CanNovVal20,CandidoVallsVanderpol}. Based on the techniques developed in \cite{PereiraNovaes23mech} for detecting normally hyperbolic limit tori, the authors of \cite{NP25} introduced a mechanism for constructing a three-dimensional vector field from a planar one, in such a way that the number of normally hyperbolic limit tori in the resulting differential system matches the number of hyperbolic limit cycles in the planar differential system. A key feature of this construction is that the three-dimensional vector field remains polynomial whenever the original planar vector field is polynomial. This approach allowed the authors of \cite{NP25} to establish a first connection between $N_h(m)$ and the Hilbert number $H(m)$, namely,
\[
N_h(m) \geqslant H\left(\left\lfloor \frac{m}{2} \right\rfloor - 1\right),
\]
as well as a first estimate for the asymptotic growth of $N_h(m)$, showing that it grows at least as fast as $m^3/128$.

A natural next step in the study of $N(m)$ is to investigate which properties known for the Hilbert number $H(m)$ are also satisfied by $N(m)$. In \cite{SantanaGasull}, the authors proved that, if finite, $H(m)$ is a strictly increasing function. Inspired by their construction, we show here that the same property holds for $N_h(m)$. Our first main result is stated below.

\begin{theorem}\label{Teo:StrictIncrease}
If $N_h(m)<\infty$ for some $m\in\mathbb{N}$, then $N_h(m+1)\geqslant N_h(m)+1$.
\end{theorem}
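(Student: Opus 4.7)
The plan is to follow the strategy devised by Santana and Gasull for the strict increase of the Hilbert number, replacing the Andronov--Hopf bifurcation used there by the torus bifurcation from a Hopf--Zero equilibrium whose explicit criterion is provided elsewhere in the paper. The idea is to take a polynomial vector field $X$ of degree at most $m$ realizing $N_h(m)$ normally hyperbolic limit tori and to produce a polynomial vector field of degree at most $m+1$ carrying one more such torus, without destroying the $N_h(m)$ original ones.

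The $N_h(m)$ tori of $X$ lie in some compact set $K \subset \R^3$. By the persistence theorem for normally hyperbolic invariant manifolds, there is $\delta > 0$ such that any vector field $\widetilde X$ with $\widetilde X - X$ of sufficiently small $C^1$-norm on a compact neighborhood $K' \supset K$ still carries $N_h(m)$ normally hyperbolic limit tori. I would then fix a point $p_0$ outside $K'$ and set up a two-parameter family $X_{\lambda,\mu} = X + Y_{\lambda,\mu}$, where $Y_{\lambda,\mu}$ is a polynomial of degree exactly $m+1$ depending smoothly on $(\lambda,\mu)$ in a neighborhood of some $(\lambda_0, \mu_0)$, engineered so that $X_{\lambda_0,\mu_0}$ has a Hopf--Zero equilibrium at $p_0$ with its linear part in the Jordan normal form required by the paper's criterion and with higher-order coefficients satisfying the associated inequalities.

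Applying the explicit torus bifurcation criterion yields a curve in parameter space along which a normally hyperbolic limit torus bifurcates from $p_0$. Choosing a parameter value on that curve sufficiently close to $(\lambda_0,\mu_0)$ produces a polynomial vector field $X_{\lambda,\mu}$ of degree at most $m+1$ with an extra normally hyperbolic limit torus near $p_0$. A scaling argument --- placing $p_0$ at large distance from $K'$ and rescaling the coefficients of $Y_{\lambda,\mu}$ accordingly --- ensures that the $C^1$-norm of $Y_{\lambda,\mu}$ on $K'$ is as small as desired, so the $N_h(m)$ pre-existing tori persist. Combined with the new torus, this yields at least $N_h(m)+1$ normally hyperbolic limit tori, proving the inequality.

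The main obstacle is the explicit construction of the family $Y_{\lambda,\mu}$. One must simultaneously locate the Hopf--Zero equilibrium at the prescribed point, put its linearization in the prescribed Jordan form, and satisfy the nonlinear inequalities of the bifurcation criterion, all while keeping the perturbation $C^1$-small on $K'$. Degree $m+1$ provides enough algebraic freedom to meet these coupled conditions, but the bookkeeping is delicate since $X$ itself contributes uncontrolled low-order terms at $p_0$ that must be compensated. A judicious translation of the equilibrium to $p_0$, rescaling of coordinates to exploit the distance between $p_0$ and $K'$, and parameter choice --- analogous in spirit to, but richer than, the planar construction of Santana and Gasull --- is what resolves this step.
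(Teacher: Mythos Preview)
Your high-level strategy---preserve the $N_h(m)$ tori by Fenichel, create a Hopf--Zero singularity away from them, and invoke the torus bifurcation criterion to gain one more torus---matches the paper's. The gap is in how you create the Hopf--Zero point. You propose an \emph{additive} perturbation $X_{\lambda,\mu}=X+Y_{\lambda,\mu}$ with $Y_{\lambda,\mu}$ of degree $m+1$, made $C^1$-small on $K'$ by placing $p_0$ far away and ``rescaling the coefficients accordingly.'' But an equilibrium at $p_0$ forces $Y_{\lambda,\mu}(p_0)=-X(p_0)$, which for generic $X$ has size of order $|p_0|^m$; likewise $DY(p_0)$ must cancel $DX(p_0)$, of order $|p_0|^{m-1}$, and the quadratic and cubic jets entering the bifurcation criterion must absorb contributions of order $|p_0|^{m-2}$ and $|p_0|^{m-3}$. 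You therefore need a polynomial of degree $m+1$ with a prescribed large $3$-jet at the distant point $p_0$ that is nonetheless $C^1$-small on the fixed ball $K'$. No simple scaling of coefficients produces such a polynomial, and you give no construction; you yourself flag this step as ``the main obstacle,'' and it is indeed not carried out.

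The paper avoids this interpolation problem altogether by a \emph{multiplicative} construction. After a small perturbation of $X$ and a translation (Lemma~\ref{Lema:intersecB}), one obtains a regular point at the origin and a linear form $\ell(x,y,z)=a_0x+b_0y$ whose zero plane $\Sigma$ is disjoint from the ball $B$ containing the tori. The degree-$(m+1)$ field $\ell\cdot\tilde X$ is a time reparametrization of $\tilde X$ on $\R^3\setminus\Sigma$, so the $N_h(m)$ tori are preserved \emph{exactly}, and it vanishes on all of $\Sigma$, in particular at the origin. A genuinely small $\delta$-deformation (the family $X_{L,\delta}$) then turns the origin into a Hopf--Zero singularity with characteristic polynomial $-\lambda^3-\delta\lambda$; a linear change brings it to Jordan form, and a large auxiliary parameter $L$ forces the nondegeneracy $\Omega>0$. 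Fenichel preserves the old tori under the $\delta$-perturbation, and Theorem~\ref{Teo:0Hopfbifsimples} supplies the new one. The step $\tilde X\mapsto\ell\cdot\tilde X$ is precisely the missing idea in your approach: it raises the degree by one and creates the equilibrium in a single move, without any need to cancel large values of $X$ at a far-away point.
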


 Theorem \ref{Teo:StrictIncrease} is proved in Section \ref{Sec:StrictIncrease}. The argument is based on the torus bifurcation phenomenon exhibited by three-dimensional vector fields near Hopf-Zero equilibria, singularities whose eigenvalues are given by $\{0, \omega i, -\omega i\}$. Conditions for the occurrence of such bifurcations are usually expressed in terms of the coefficients of higher-order normal forms of the vector field near the equilibrium point (see, for instance, \cite[Theorem 3]{G81}, and \cite[Corollary 2]{L79}). 

In this work, we introduce a novel criterion that provides explicit conditions for the occurrence of a torus bifurcation near Hopf-Zero equilibria. Notably, we assume only that the linear part of the unperturbed vector field is in Jordan normal form, thereby avoiding the intricate computations involved in deriving higher-order normal forms. This formulation is particularly well suited to our setting, where a more direct criterion is required.

More specifically, we consider differential systems of the form
\begin{equation}\label{unp}
\begin{aligned}
\dot{x} &= -y + P(x,y,z),\\
\dot{y} &= x + Q(x,y,z),\\
\dot{z} &= R(x,y,z),
\end{aligned}
\end{equation}
where $P$, $Q$, and $R$ are $C^3$ functions without constant or linear terms. Under the nondegeneracy condition
\begin{equation}\label{eq:gencond}
\Omega := -\left(\dfrac{\p^2 P}{\p x \p z}(0,0,0) + \dfrac{\p^2 Q}{\p y \p z}(0,0,0)\right)
\left(\dfrac{\p^2 R}{\p 
x^2}(0,0,0) + \dfrac{\p^2 R}{\p y^2}(0,0,0)\right) > 0,
\end{equation}
our second main result establishes explicit conditions for a torus bifurcation to occur in perturbations of the differential system \eqref{unp} within the two-parameter family
\begin{equation}\label{eq:app0Hopfperturbado}
\begin{aligned}
\dot{x} &= -y + P(x,y,z) + \varepsilon U(x,y,z;\mu,\varepsilon), \\
\dot{y} &= x + Q(x,y,z) + \varepsilon V(x,y,z;\mu,\varepsilon), \\
\dot{z} &= R(x,y,z) + \varepsilon W(x,y,z;\mu,\varepsilon),
\end{aligned}
\end{equation}
giving rise to a normally hyperbolic limit torus from the origin. Here, $\mu \in J \subset \mathbb{R}$, where $J$ is an open interval, $\varepsilon \in (-\varepsilon_0, \varepsilon_0)$, and $U, V, W$ are $C^3$ functions.

For the sake of conciseness, we adopt the following notation. Given a $C^r$ function $F: \mathbb{R}^3 \to \mathbb{R}$, we denote by $F^{(j,k,l)}$ the partial derivative $\partial^{j+k+l} F / \partial x^j \partial y^k \partial z^l$ evaluated at $(0,0,0)$. When $F$ also depends on a parameter $\mu$, we write $F^{(j,k,l)}(\mu)$ for the corresponding partial derivative evaluated at $(0,0,0,\mu)$. In what follows, $U_i$, $V_i$, and $W_i$ denote the coefficients of $\varepsilon^i$ in the power series expansions of $U$, $V$, and $W$, respectively.

\begin{theorem}\label{Teo:0Hopfbif}
Let $P,Q,R:\R^3\to\R^3$ be $C^3$ functions with no constant or linear terms for which \eqref{eq:gencond} holds and $U, V, W:
\R^3\times\R^2\to\R^3$ be $C^3$ functions as defined above.
Denote
\begin{equation}\label{eq:appcond1}
\begin{aligned}
\Gamma(\mu):=&\displaystyle \frac{2R^{(0,0,2)}(U_1^{(1,0,0)}(\mu)+V_1^{(0,1,0)}(\mu))^2-W_1^{(0,0,1)}(\mu)
   (U_1^{(1,0,0)}(\mu)+V_1^{(0,1,0)}(\mu))}{(R^{(0,2,0)}+R^{(2,0,0)})
   (P^{(1,0,1)}+Q^{(0,1,1)})^2}\\
   &\displaystyle+\frac{4W_2(0,0,0;\mu)}{(R^{(0,2,0)}+R^{(2,0,0)})},
\end{aligned}
\end{equation}
and
\begin{equation*}\label{eq:appcond2}
\eta(\mu):= \dfrac{\pi\left(
   \left(P^{(1,0,1)}+Q^{(0,1,1)}\right)W_1^{(0,0,1)}(\mu)-  R^{(0,0,2)}\left(U_1^{(1,0,0)}(\mu)+V_1^{(0,1,0)}(\mu)\right)\right)}{ \left(P^{(1,0,1)}+Q^{(0,1,1)}\right)}.
\end{equation*}
Assume that $U_1(0,0,0;\mu) = V_1(0,0,0;\mu) = W_1(0,0,0;\mu) = 0$ and $\Gamma(\mu) < 0$ for every $\mu \in J$. Suppose further that there exists $\mu_0 \in J$ such that $\eta(\mu_0) = 0$ and $\eta'(\mu_0) \neq 0$. Then, there exist a quantity $\ell_1$, depending only on $P$, $Q$, and $R$ --- see \eqref{eq:Nh_Lyapunov} for the explicit expression of $\ell_1$ --- and a smooth curve $\mu(\varepsilon)$ defined in a neighborhood of $0$, with $\mu(0) = \mu_0$, such that for each sufficiently small $\varepsilon > 0$, there exists an interval $I_\varepsilon \subset J$ containing $\mu(\varepsilon)$ with the following property: for every $\mu \in I_{\varepsilon}$ such that $(\mu - \mu(\varepsilon))\ell_1 < 0$, the differential system \eqref{eq:app0Hopfperturbado} has a normally hyperbolic limit torus surrounding a periodic solution. Moreover, this limit torus bifurcates from the periodic solution as $\mu$ crosses $\mu(\varepsilon)$, while the periodic solution itself bifurcates from the origin as $\varepsilon$ crosses $0$.
\end{theorem}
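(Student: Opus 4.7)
The plan is to reduce the torus bifurcation of the three-dimensional system \eqref{eq:app0Hopfperturbado} to an Andronov-Hopf bifurcation of an autonomous planar averaged system, and then invoke the standard correspondence between hyperbolic limit cycles of that averaged system and normally hyperbolic limit tori of the original flow, as developed in \cite{CanNov20JDE,PereiraNovaes23mech}.

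I first introduce cylindrical coordinates $x=r\cos\theta$, $y=r\sin\theta$ around the Hopf-Zero equilibrium. Since $P,Q$ carry no constant or linear terms, a direct computation gives $\dot\theta=1+O(r+|z|)$, so in a neighborhood of the origin $\theta$ can be used as the new independent variable, producing a $C^3$, $2\pi$-periodic non-autonomous planar system for $(r(\theta),z(\theta))$ depending smoothly on $(\varepsilon,\mu)$. Averaging in $\theta$ up to order $\varepsilon^2$ yields an autonomous planar system $(\dot r,\dot z)=(\bar F,\bar G)(r,z;\varepsilon,\mu)$ whose hyperbolic equilibria correspond to hyperbolic periodic solutions of \eqref{eq:app0Hopfperturbado} and whose hyperbolic limit cycles correspond to normally hyperbolic limit tori. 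The hypothesis $U_1(0,0,0;\mu)=V_1(0,0,0;\mu)=W_1(0,0,0;\mu)=0$ guarantees that the origin remains an equilibrium of the averaged system for all small $\varepsilon$ and $\mu\in J$; this equilibrium is the averaged-system counterpart of the periodic solution in the statement.

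Next I compute the Jacobian of the averaged system at $(r,z)=(0,0)$. Condition \eqref{eq:gencond}, i.e.\ $\Omega>0$, is exactly what forces this Jacobian to have positive determinant for $\varepsilon$ small, so the eigenvalues form a complex-conjugate pair. A bookkeeping of the averaging integrals at order $\varepsilon$ identifies the trace as $\varepsilon\eta(\mu)/\pi+O(\varepsilon^2)$. The implicit function theorem applied to the trace then produces, from the hypotheses $\eta(\mu_0)=0$ and $\eta'(\mu_0)\neq 0$, the smooth curve $\mu=\mu(\varepsilon)$ with $\mu(0)=\mu_0$ along which the Jacobian has purely imaginary eigenvalues, and it simultaneously supplies the transversality condition for Andronov-Hopf. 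A second-order averaging computation, necessary to capture the $W_2(0,0,0;\mu)$ contribution to $\Gamma(\mu)$, combined with Kuznetsov-type formulas, shows that the first Lyapunov coefficient of this planar Hopf bifurcation factors (up to a positive multiplicative constant) as $\Gamma(\mu)\,\ell_1$, with $\ell_1$ depending only on the third-order jet of $(P,Q,R)$ and given by \eqref{eq:Nh_Lyapunov}. Since $\Gamma(\mu)<0$ by assumption, the sign of the first Lyapunov coefficient is opposite to that of $\ell_1$, so the classical Andronov-Hopf theorem produces, for each sufficiently small $\varepsilon>0$, an interval $I_\varepsilon$ around $\mu(\varepsilon)$ on which a hyperbolic limit cycle bifurcates from the averaged-system origin precisely for $\mu\in I_\varepsilon$ with $(\mu-\mu(\varepsilon))\ell_1<0$. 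Translating back via the averaging-to-tori correspondence yields the desired normally hyperbolic limit torus of \eqref{eq:app0Hopfperturbado} surrounding the bifurcating periodic solution; the two bifurcation statements (periodic orbit from the origin as $\varepsilon$ crosses $0$, torus from that periodic orbit as $\mu$ crosses $\mu(\varepsilon)$) follow directly from the same correspondence.

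The main obstacle is the second-order averaging step. The explicit expressions for $\eta(\mu)$ and $\Gamma(\mu)$ must emerge as integrals over $[0,2\pi]$ of products of partial derivatives of $P,Q,R,U_1,V_1,W_1,U_2,V_2,W_2$, and one has to verify that, after integration in $\theta$, only the specific partial derivatives appearing in \eqref{eq:appcond1} survive. Working directly with the Jordan form of the linear part, rather than passing through a simplifying polynomial normal form of the full nonlinearity, keeps these integrals tractable but demands careful parity and monomial bookkeeping; matching the order-$\varepsilon$ contribution with $\eta(\mu)$ and the order-$\varepsilon^2$ diagonal contribution together with the cubic Lyapunov-coefficient correction with $\Gamma(\mu)\,\ell_1$ is where the bulk of the technical work lies.
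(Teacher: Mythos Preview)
Your outline has the right overall architecture (cylindrical coordinates, averaging, planar Hopf/Neimark--Sacker, then lift to a torus), but it misplaces the two central objects, and this makes the argument fail as written.

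First, you omit the blow-up $(x,y,z)\mapsto\varepsilon(x,y,z)$ that precedes the cylindrical change. Without it the quadratic terms of $P,Q,R$ are $O(1)$ in $\varepsilon$, so after passing to $\theta$ as time variable the system is not in the standard form \eqref{eq:Averaging} and the Melnikov/averaging machinery of Section~\ref{Sec:Preliminary} does not apply. Once the rescaling is performed, the first Melnikov function is
\[
\mathbf f_1(r,w)=\Big(\pi r\big((P^{(1,0,1)}+Q^{(0,1,1)})w+U_1^{(1,0,0)}+V_1^{(0,1,0)}\big),\ \tfrac{\pi}{2}\big((R^{(0,2,0)}+R^{(2,0,0)})r^2+2R^{(0,0,2)}w^2+4W_1^{(0,0,1)}w+4W_2(0,0,0)\big)\Big),
\]
and this drives the rest of the proof.

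Second, and more seriously, the relevant equilibrium of the averaged system is \emph{not} $(r,z)=(0,0)$. At $r=0$ the cylindrical chart is singular, the set $\{r=0\}$ collapses to a point of the original phase space rather than a closed orbit, and in any case $(0,0)$ is not even a zero of $\mathbf f_1$ unless $W_2(0,0,0;\mu)=0$. The periodic solution in the statement corresponds to the zero of $\mathbf f_1$ at
\[
r_\mu=\sqrt{-\Gamma(\mu)},\qquad w_\mu=-\dfrac{U_1^{(1,0,0)}+V_1^{(0,1,0)}}{P^{(1,0,1)}+Q^{(0,1,1)}},
\]
with $r_\mu>0$. Thus the hypothesis $\Gamma(\mu)<0$ is not a sign factor in the Lyapunov coefficient as you claim; it is precisely the condition that this positive-$r$ equilibrium exists. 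The eigenvalues of $D\mathbf f_1(r_\mu,w_\mu)$ are $\eta(\mu)\pm\sqrt{\eta(\mu)^2-\pi^2\Omega r_\mu^2}$, which explains simultaneously the role of $\Omega>0$ (complex eigenvalues) and of $\eta(\mu)$ (real part, transversality). The Neimark--Sacker analysis of the Poincar\'e map around this fixed point, via Theorem~\ref{Teo:InvariantToriNeimark}, then gives $\ell_1^\varepsilon=\dfrac{\pi\varepsilon^2}{16\Omega^2}\ell_1+O(\varepsilon^3)$, with no $\Gamma(\mu)$ prefactor. Your asserted factorisation $\Gamma(\mu)\,\ell_1$ of the Lyapunov coefficient, and the interpretation of the vanishing conditions on $U_1,V_1,W_1$ at the origin as fixing $(0,0)$ as the bifurcating equilibrium, are therefore incorrect; those vanishing conditions instead ensure that after the $\varepsilon$-rescaling the perturbation enters at the right order in the Melnikov expansion.
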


Theorem \ref{Teo:0Hopfbif} is proven in Section \ref{Sec:TorusBif0Hopf}. Its proof is mainly based on the averaging theory. The role of the averaging theory in the study of the bifurcation of invariant normally hyperbolic tori is discussed in Section \ref{Sec:Preliminary}.

\section{Torus bifurcation via second order analysis}\label{Sec:Preliminary}

In this section, we briefly discuss some recent results in the literature related to the bifurcation of invariant tori. We consider systems of non-autonomous $T$-periodic differential equations, given in the standard form
\begin{equation}\label{eq:Averaging}
\dot{\mathbf{x}}=\varepsilon F_1(t,\mathbf{x},\mu)+\varepsilon^2 F_2(t,\mathbf{x},\mu)+\varepsilon^{3}\tilde{F}(t,\mathbf{x},\mu,\varepsilon),\quad (t,\mathbf{x},\mu,\varepsilon)\in\mathbb{R}\times D\times J\times (-\varepsilon_0,\varepsilon_0),
\end{equation}
where $D$ is an open bounded subset of $\mathbb{R}^2$, $J$ is an open interval and $\varepsilon_0>0$,  and the functions $F_2,F_2,\tilde F$ are of class $C^r$ function, $r>1$ and $T$-periodic in the variable $t$. 

From the periodicity, we can consider \eqref{eq:Averaging} as the following family of autonomous differential systems in the extended phase space $\mathbb{S}^1\times D$, where $\mathbb{S}^1\equiv\mathbb{R}/T\mathbb{Z}$,
\begin{equation}\label{eq:AveragingExtended}
\begin{aligned}
\dot{t}=&1\\
\dot{\mathbf{x}}=&\varepsilon F_1(t,\mathbf{x},\mu)+\varepsilon^2 F_2(t,\mathbf{x},\mu)+\varepsilon^{3}\tilde{F}(t,\mathbf{x},\mu,\varepsilon).
\end{aligned}
\end{equation}
The Poincaré map $\Pi(\mathbf{x},\mu,\varepsilon)$ associated to the equation \eqref{eq:AveragingExtended} defined on the section $\{t=0\}$ is given by
\begin{equation}\label{eq:Poincare}
\Pi(\mathbf{x},\mu,\varepsilon)=\mathbf{x}+\varepsilon \mathbf{f}_{1}(\mathbf{x},\mu)+\varepsilon^2\mathbf{f}_{2}(\mathbf{x},\mu)+O(\varepsilon^{3}),
\end{equation}
where the functions $\mathbf{f}_1$ and $\mathbf{f}_2$ are the \emph{Melnikov functions}. Namely, 
\begin{equation}\label{eq:averageformulas}
\begin{aligned}
\mathbf{f}_1(\mathbf{z})=&\int_{0}^{T}F_1(t,\mathbf{z})dt,\\
\mathbf{f}_2(\mathbf{z})
=&\int_{0}^{T}\left(F_2(t,\mathbf{z})+\partial_xF_1(t,\mathbf{z})\int_{0}^{t}F_1(s,\mathbf{z})ds\right)dt.
\end{aligned}
\end{equation}
We refer the reader to \cite{DouglasEstrobo} for a detailed study of these functions and their relationship with the \emph{averaged functions}.

The use of Melnikov and averaged functions to detect invariant tori in differential systems has been successfully implemented in several papers (see, for instance \cite{CanNov20JDE,Novaes2023,PereiraNovaes23mech}). The main result in this investigation consists in determining generic conditions  for the existence of a curve $\mu(\varepsilon)$ on the parameter space $(\mu,\varepsilon)$ for which the Poincaré map  \eqref{eq:Poincare} undergoes a \emph{Neimark--Sacker bifurcation} \cite{neimark1959some}, which implies the birth of an limit torus from a periodic solution of \eqref{eq:AveragingExtended}. We briefly discuss this method in the next paragraphs.

Suppose that $\mathbf{f}_1(\mathbf{x},\mu)$ is non-vanishing. We need to assume three hypotheses, namely \textbf{H}, \textbf{T} and \textbf{ND}.

\noindent\textbf{H}. \emph{Hopf point hypothesis.} There is a continuous curve $\mu\in J\mapsto \mathbf{x}_\mu\in D$ defined in an interval $J\ni\mu_0$ such that $\mathbf{f}_1(\mathbf{x}_\mu,\mu)\equiv 0$ and the pair of conjugated eigenvalues $\eta(\mu)\pm i\zeta(\mu)$ of $D_{\mathbf{x}}\mathbf{f}_1(\mathbf{x}_\mu;\mu)$ satisfies $\eta(\mu_0)=0$ and $\zeta(\mu_0)=\omega_0>0$.
\medskip

\noindent\textbf{T}. \emph{Transversality.} $\alpha_d=\dfrac{d\eta}{d\mu}(\mu_0)\neq 0$. 

From \textbf{H} (see \cite[Lemma 3]{CanNov20JDE}), we get the existence of a neighborhood $J_0\subset J$ of $\mu_0$, a parameter $\varepsilon_1$, $0<\varepsilon_1<\varepsilon_0$ and a unique function $\boldsymbol{\xi}:J_0\times(-\varepsilon_1,\varepsilon_1)\to\mathbb{R}^2$ such that $$\boldsymbol{\xi}(\mu,0)=\mathbf{x}_\mu\,\,\text{
and } \Pi(\boldsymbol{\xi}(\mu,\varepsilon),\mu,\varepsilon)=\boldsymbol{\xi}(\mu,\varepsilon),\,\,\text{ for all } (\mu,\varepsilon)\in J_0\times (-\varepsilon_1,\varepsilon_1).$$ This implies that equation \eqref{eq:AveragingExtended} admits a unique $T$-periodic orbit $\Phi(t;\mu,\varepsilon)$ satisfying $\Phi(0,\mu,\varepsilon)\to\mathbf{x}_\mu$ as $\varepsilon\to 0$. Now, for each $(\mu,\varepsilon)\in J_0\times(-\varepsilon_1,\varepsilon_1)$, let $\lambda(\mu,\varepsilon)$ and $\overline{\lambda(\mu,\varepsilon)}$ be the pair of complex eigenvalues of $D_{\mathbf{z}}\Pi(\boldsymbol{\xi}(\mu,\varepsilon),\mu,\varepsilon)$. \textbf{T} implies that there exist $\varepsilon_2$, $0<\varepsilon_2<\varepsilon_1$ and a unique smooth function $\mu:(-\varepsilon_2,\varepsilon_2)\to J_0$, with $\mu(0)=\mu_0$ satisfying
\begin{equation*}
|\lambda(\mu(\varepsilon),\varepsilon)|=1,\,\, \lambda(\mu(\varepsilon),\varepsilon)^k\neq 1,\,\text{for }k=1,2,3,4,\,\,\text{and }\left.\dfrac{d}{d\mu}|\lambda(\lambda(\mu(\varepsilon),\varepsilon)|\right\vert_{\mu=\mu(\varepsilon)}\neq 0.
\end{equation*}

The third hypothesis deals with the non-degeneracy of the Lyapunov coefficient $\ell_1^{\varepsilon}$ associated to the Poincaré map $\Pi(\mathbf{x},\mu,\varepsilon)$ at $(\boldsymbol{\xi}(\mu(\varepsilon),\varepsilon))$. The $2$-jet of $\ell_1^{\varepsilon}$ with respect to $\varepsilon$ writes as
\begin{equation*}\label{eq:LyapunovNeimark}
\ell_1^{\varepsilon}=\varepsilon \ell_{1,1}+\varepsilon^{2}\ell_{1,2}+O(\varepsilon^{3}).
\end{equation*}
\noindent\textbf{ND}. \emph{Non-degeneracy.} $(\ell_{1,1})^2+(\ell_{1,2})^2\neq 0$.

The following result is the second-order case of the more general higher-order version established in \cite[Theorem B]{CanNov20JDE}. The normal hyperbolicity of the bifurcated invariant tori was observed in \cite[Theorem 4.1]{Novaes2025} and follows from \cite{chaperon2011generalised}.

\begin{theorem}[{\cite[Theorem B]{CanNov20JDE} and \cite[Theorem 4.1]{Novaes2025}}]\label{Teo:InvariantToriNeimark}
Suppose that $\mathbf{f}_1(\mathbf{x},\mu)$ is non-vanishing. Assume in addition that \textbf{H}, \textbf{T}, and \textbf{ND} hold and let $j^\ast\in\{1,2\}$  be the first subindex such that $\ell_{1,j^\ast}\neq 0$. Then, there exists a curve $\mu(\varepsilon)$, defined in a small neighborhood of $\mu_0$ and satisfying $\mu(0)=\mu_0$, such that for each $\varepsilon$ sufficiently small there exists an interval $I_{\varepsilon}\subset J$ containing $\mu(\varepsilon)$ and an open set $\mathcal{U}_\varepsilon\subset\mathbb{S}^1\times D$ such that
\begin{itemize}
    \item[1.] If $\mu\in I_\varepsilon$ and $\alpha_d\cdot\ell_{1,j^\ast}\cdot(\mu-\mu(\varepsilon))\geqslant0$, equation \eqref{eq:AveragingExtended} has one $T$-periodic orbit  $\Phi(t;\mu(\varepsilon),\varepsilon)\in\mathcal{U}_\varepsilon$ which is unstable (resp. asymptotically stable), provided that $\ell_{1,j^\ast}>0$ (resp. $\ell_{1,j^\ast}<0$). Equation \eqref{eq:AveragingExtended} admits no invariant tori in $\mathcal{U}_\varepsilon$;
    \item[2.] If $\mu\in I_\varepsilon$ and $\alpha_d\cdot\ell_{1,j^\ast}\cdot(\mu-\mu(\varepsilon))<0$, equation \eqref{eq:AveragingExtended} admits a unique invariant torus $T_{\mu,\varepsilon}$ in $\mathcal{U}_\varepsilon$ surrounding the periodic orbit $\Phi(t;\mu,\varepsilon)$. The invariant torus $T_{\mu,\varepsilon}$ is normally hyperbolic and attracting if $\ell_{1,j^\ast}>0$ or repelling if $\ell_{1,j^\ast}<0$.
\end{itemize}
\end{theorem}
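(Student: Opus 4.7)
The plan is to reduce Theorem \ref{Teo:0Hopfbif} to Theorem \ref{Teo:InvariantToriNeimark} by putting \eqref{eq:app0Hopfperturbado} into the standard averaging form \eqref{eq:Averaging} via a Hopf--Zero blow-up. I first introduce cylindrical coordinates $x = r\cos\theta$, $y = r\sin\theta$, which exploits the eigenvalues $\pm i$ of the unperturbed linear part, and then rescale $r = \varepsilon\rho$, $z = \varepsilon\zeta$. Because $P,Q,R$ vanish to second order and $U_1(0,0,0;\mu)=V_1(0,0,0;\mu)=W_1(0,0,0;\mu)=0$, the rescaling gives $\dot\rho,\dot\zeta = O(\varepsilon)$ and $\dot\theta = 1 + O(\varepsilon)$; dividing by $\dot\theta$ yields a $2\pi$-periodic system $d\rho/d\theta, d\zeta/d\theta = \varepsilon F_1(\theta,\rho,\zeta,\mu) + \varepsilon^2 F_2 + O(\varepsilon^3)$ fitting the framework of Section \ref{Sec:Preliminary}.

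Next I compute the first Melnikov map $\mathbf{f}_1(\rho,\zeta,\mu)$ from \eqref{eq:averageformulas}. The angular integration kills every monomial of $P_2,Q_2,R_2$ except those that reduce to $\cos^2\theta$ or a constant, extracting only the coefficients $P^{(1,0,1)}$, $Q^{(0,1,1)}$, $R^{(2,0,0)}$, $R^{(0,2,0)}$, $R^{(0,0,2)}$, together with the linear contributions of $U_1,V_1,W_1$ at the origin and the constant $W_2(0,0,0;\mu)$. Solving $\mathbf{f}_1 = 0$ with $\rho > 0$ yields a unique smooth curve $(\rho_\mu,\zeta_\mu)$, where $\zeta_\mu = -(U_1^{(1,0,0)}(\mu)+V_1^{(0,1,0)}(\mu))/(P^{(1,0,1)}+Q^{(0,1,1)})$ and $\rho_\mu^2$ is a rational function whose positivity is exactly the hypothesis $\Gamma(\mu) < 0$; this is the Hopf-point hypothesis \textbf{H}.

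A direct computation of the Jacobian $D\mathbf{f}_1(\rho_\mu,\zeta_\mu,\mu)$ shows that its trace equals $2\eta(\mu)$ and its determinant equals $-\pi^2 (P^{(1,0,1)}+Q^{(0,1,1)})(R^{(2,0,0)}+R^{(0,2,0)})\rho_\mu^2 = \pi^2\Omega\,\rho_\mu^2 > 0$ thanks to \eqref{eq:gencond}. Hence the eigenvalues form a complex conjugate pair $\eta(\mu)\pm i\zeta(\mu)$ with $\zeta(\mu_0) = \pi\rho_{\mu_0}\sqrt{\Omega} > 0$; the hypothesis $\eta(\mu_0)=0$ places the flow at a Hopf point and $\eta'(\mu_0)\neq 0$ yields transversality \textbf{T}, so Theorem \ref{Teo:InvariantToriNeimark} already supplies the bifurcation curve $\mu(\varepsilon)$ with $\mu(0)=\mu_0$.

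The main technical step is the identification of the Lyapunov coefficient $\ell_1$ in \eqref{eq:Nh_Lyapunov} and the non-degeneracy \textbf{ND}. After translating $(\rho,\zeta)$ to center the fixed point at the origin and changing basis to place the linear part in canonical Hopf normal form, I substitute the quadratic terms of $\mathbf{f}_1$ at $(\rho_\mu,\zeta_\mu)$---whose only nonzero coefficients are $\pi(P^{(1,0,1)}+Q^{(0,1,1)})$, $\pi(R^{(2,0,0)}+R^{(0,2,0)})$, and $2\pi R^{(0,0,2)}$---together with the cubic terms originating from $P_3,Q_3,R_3$, into the closed-form expression for the first Lyapunov number of a planar Hopf bifurcation. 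Using the relations defining $(\rho_{\mu_0},\zeta_{\mu_0})$ and $\omega(\mu_0) = \pi\rho_{\mu_0}\sqrt{\Omega}$, the result simplifies to the quantity \eqref{eq:Nh_Lyapunov}, which (up to a positive multiplicative factor) depends only on the Taylor coefficients of $P,Q,R$; this is the $\ell_1$ in the statement. When $\ell_1 \neq 0$, item 2 of Theorem \ref{Teo:InvariantToriNeimark} produces a normally hyperbolic torus in $\mathbb{S}^1 \times D$ surrounding the periodic orbit $\Phi(\cdot;\mu,\varepsilon)$ of the rescaled system; unwinding the scalings and the cylindrical coordinates gives the normally hyperbolic limit torus of \eqref{eq:app0Hopfperturbado} surrounding the periodic solution that bifurcates from the origin as $\varepsilon$ crosses $0$, with the sign condition $(\mu - \mu(\varepsilon))\ell_1 < 0$ corresponding to $\alpha_d\cdot\ell_{1,j^\ast}\cdot(\mu-\mu(\varepsilon))<0$ in Theorem \ref{Teo:InvariantToriNeimark}.
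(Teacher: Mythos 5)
Your proposal does not address the statement it is meant to prove. The statement in question is Theorem \ref{Teo:InvariantToriNeimark} itself --- the Neimark--Sacker-type torus-bifurcation theorem for the general averaged family \eqref{eq:AveragingExtended} under hypotheses \textbf{H}, \textbf{T}, and \textbf{ND}. What you have written is an outline of the proof of Theorem \ref{Teo:0Hopfbif}: you blow up the Hopf--Zero singularity, pass to cylindrical coordinates, compute the Melnikov functions, verify \textbf{H}, \textbf{T}, \textbf{ND}, and then \emph{invoke} Theorem \ref{Teo:InvariantToriNeimark} to conclude. As an argument for Theorem \ref{Teo:InvariantToriNeimark} this is circular: the theorem to be proved is the main black box of your argument. (Your outline does track the paper's proof of Theorem \ref{Teo:0Hopfbif} in Section \ref{Sec:TorusBif0Hopf} quite closely, but that is a different statement; note also that the paper itself does not prove Theorem \ref{Teo:InvariantToriNeimark}, attributing it to \cite{CanNov20JDE} and \cite{Novaes2025}.)

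A genuine proof would have to work directly with the Poincar\'e map \eqref{eq:Poincare} of the abstract system \eqref{eq:Averaging}, with no reference to Hopf--Zero equilibria. The required steps are: (i) use \textbf{H} and the implicit function theorem to produce the branch of fixed points $\boldsymbol{\xi}(\mu,\varepsilon)$ and the associated $T$-periodic orbits $\Phi(t;\mu,\varepsilon)$; (ii) use \textbf{T} to obtain the curve $\mu(\varepsilon)$ along which the multipliers $\lambda(\mu(\varepsilon),\varepsilon)$ lie on the unit circle and cross it transversally in $\mu$; (iii) use \textbf{ND} to control the sign of the first Lyapunov coefficient $\ell_1^{\varepsilon}=\varepsilon\,\ell_{1,1}+\varepsilon^2\ell_{1,2}+O(\varepsilon^3)$ via its first nonzero coefficient $\ell_{1,j^\ast}$; and (iv) conclude with a Neimark--Sacker argument plus the normal-hyperbolicity result of \cite[Theorem 4.1]{Novaes2025}, which rests on \cite{chaperon2011generalised}. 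The essential difficulty --- and the reason the classical Neimark--Sacker theorem cannot simply be quoted --- is that as $\varepsilon\to 0$ the multipliers tend to $1$ (a strong-resonance value, so the conditions $\lambda^k\neq 1$, $k=1,\dots,4$, degenerate) and the Lyapunov coefficient vanishes like $\varepsilon^{j^\ast}$; one must therefore produce an interval $I_\varepsilon$ and a neighborhood $\mathcal{U}_\varepsilon$ that survive this singular limit, with uniqueness of the torus in $\mathcal{U}_\varepsilon$ and the dichotomy of items 1 and 2. None of these steps appears in your proposal.
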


 One can explicitly compute the Lyapunov coefficient $\ell_1^{\varepsilon}$, when the Poincaré map satisfies some properties. We remark that these are not restrictive properties, since changes of variables and parameters put the Poincaré map into the convenient form. 
 
 More precisely, applying the change of variables and parameters $\mathbf{x}=\mathbf{y}+\boldsymbol{\xi}(\mu,\varepsilon)$ and $\mu=\sigma+\mu(\varepsilon)$ to the Poincaré map \eqref{eq:Poincare}, yields
\begin{equation*}\label{eq:Poincare_H}
\begin{array}{rl}
H_\varepsilon(\mathbf{y},\sigma)&=\left(H_\varepsilon^1(\mathbf{y},\sigma),H_\varepsilon^2(\mathbf{y},\sigma)\right)=\\
&=\Pi(\mathbf{y}+\boldsymbol{\xi}(\sigma+\mu(\varepsilon),\varepsilon);\sigma+\mu(\varepsilon),\varepsilon)-\boldsymbol{\xi}(\sigma+\mu(\varepsilon),\varepsilon).
\end{array}
\end{equation*}
The expansion of $D_{\mathbf{y}}H_\varepsilon(0,0)$ about $\varepsilon=0$ can be written as
\begin{equation*}
D_{\mathbf{y}}H_\varepsilon(0,0)=Id+\varepsilon A_\varepsilon+O(\varepsilon^{N+1}).
\end{equation*}
Via a linear change of variables $\mathbf{y}=M\cdot\tilde{\mathbf{y}}$, if necessary, we assume that $Id+\varepsilon A_\varepsilon$ is in its Jordan canonical form, more precisely,
$$Id+\varepsilon A_\varepsilon=\left(\begin{array}{cc}
   1+\tilde{\eta}(\varepsilon)  &  -\tilde{\zeta}(\varepsilon)\\
    \tilde{\zeta}(\varepsilon) & 1+\tilde{\eta}(\varepsilon)
\end{array}\right),$$
where $\tilde{\eta}(\varepsilon)=\varepsilon\eta_1+\varepsilon^2\eta_2$ and $\tilde{\zeta}(\varepsilon)=\varepsilon \zeta_1+\varepsilon^2 \zeta_2$, with $\eta_j,\zeta_j\in\mathbb{R}$ for $j\in\{1,2\}$. Thus, we consider the Taylor expansion of $H_\varepsilon(\mathbf{y},0)$ around $(0,0)$:
$$H_{\varepsilon}(\mathbf{y},0)=A \mathbf{x}+\frac{1}{2}B(\mathbf{x,x})+\frac{1}{6}C(\mathbf{x,x,x})+O(\Vert\mathbf{x}\Vert^4),$$
and also the inner product in $\mathbb{C}^2$ given by $\langle \mathbf{u},\mathbf{v}\rangle=\bar{\mathbf{u}}^\intercal\cdot\mathbf{v}$. Let $e^{i\theta_\varepsilon}=\lambda(\mu(\varepsilon),\varepsilon)$ and $\mathbf{p}=(1,-i)/\sqrt{2}$. Then, the Lyapunov coefficient is given by the formula
\begin{eqnarray}\label{eq:Lyapunov}
\ell_1^{\varepsilon}&=&\mathrm{Re}\left(\frac{e^{-i\theta_\varepsilon}\langle\mathbf{p},C(\mathbf{p,p,\bar{p}})\rangle}{2}\right)-\mathrm{Re}\left(\frac{(1-2e^{i\theta_\varepsilon})e^{-2i\theta_\varepsilon}}{2(1-e^{i\theta_\varepsilon})}\langle\mathbf{p},B(\mathbf{p,p})\rangle\langle\mathbf{p},B(\mathbf{p,\bar{p}})\rangle\right)\nonumber\\
&&-\frac{\left\Vert \langle\mathbf{p},B(\mathbf{p,\bar{p}})\rangle\right\Vert^2}{2}-\frac{\left\Vert \langle\mathbf{p},B(\mathbf{\bar{p},\bar{p}})\rangle\right\Vert^2}{4}.
\end{eqnarray}

\section{Torus bifurcation from Hopf-Zero singularities}\label{Sec:TorusBif0Hopf}

This section is devoted to the proof of Theorem \ref{Teo:0Hopfbif}. Before presenting it, we prove Theorem \ref{Teo:0Hopfbifsimples}, a simplified version of Theorem \ref{Teo:0Hopfbif} that illustrates one of the simplest perturbations that can be applied to a differential system with a Hopf-Zero equilibrium satisfying the nondegeneracy condition \eqref{eq:gencond}, leading to the emergence of a normally hyperbolic limit torus. Notably, when the unperturbed system \eqref{unp} is polynomial, this perturbation preserves its degree, an essential feature that will allow us to establish the strict increase of the number $N_h(m)$. The proofs of Theorems \ref{Teo:0Hopfbifsimples} and \ref{Teo:0Hopfbif} are essentially the same, differing only in the complexity of the expressions involved. Therefore, the following proof serves as a didactic version of the proof of Theorem \ref{Teo:0Hopfbif}, where the more cumbersome expressions will be omitted.

\begin{theorem}\label{Teo:0Hopfbifsimples}
Let $P,Q,R$ be $C^3$ functions with no constant or linear terms for which \eqref{eq:gencond} holds. Denote 
$$\beta=-\mathrm{sign}\Big(R^{(2,0,0)}+R^{(0,2,0)} \Big)$$
and
\begin{equation}\label{eq:Nh_Lyapunov}
\parbox{0.9\textwidth}{\raggedright\hangafter=1\hangindent=2em$\displaystyle
\ell_1=-\Omega  \Big(R^{(0,2,0)}+R^{(2,0,0)} \Big)
    \Big(R^{(0,0,2)}  \Big( \Big(R^{(0,2,0)}-R^{(2,0,0)} \Big) \Big(P^{(0,1,1)}+Q^{(1,0,1)} \Big)  
+2 \Big(-Q^{(2,0,0)} \Big(P^{(2,0,0)}+Q^{(1,1,0)}+2
   R^{(1,0,1)} \Big)+2 P^{(1,0,1)}R^{(1,1,0)}+P^{(1,2,0)}
+P^{(1,1,0)}
   P^{(2,0,0)}  +P^{(3,0,0)}-Q^{(0,2,0)}
    \Big(Q^{(1,1,0)}+2
   R^{(1,0,1)} \Big)+Q^{(0,3,0)}+Q^{(2,1,0)}
 +2 R^{(0,2,1)}+2 R^{(2,0,1)} \Big)+2 P^{(0,2,0)}
    \Big(P^{(1,1,0)}+Q^{(0,2,0)} \Big)+4
    \Big(P^{(0,2,0)}+P^{(2,0,0)} \Big)
 R^{(0,1,1)} \Big) -4  \Big(R^{(0,2,0)}+R^{(2,0,0)} \Big)  \Big(3
   P^{(0,0,2)} R^{(0,1,1)}-3 Q^{(0,0,2)}
   R^{(1,0,1)}
+R^{(0,0,3)} \Big)-2 R^{(1,1,0)}
   (R^{(0,0,2)})^2 \Big)+R^{(0,0,2)}
    \Big(R^{(0,2,0)}+R^{(2,0,0)} \Big)^2
    \Big(4  \Big(R^{(0,2,0)}
+R^{(2,0,0)} \Big)
    \Big(P^{(0,0,2)} 
   \Big(2R^{(0,1,1)}-P^{(1,1,0)}-Q^{(0,2,0)} \Big)  +Q^{(0,0,2)}
    \Big(P^{(2,0,0)}
+Q^{(1,1,0)}-2
   R^{(1,0,1)} \Big)-P^{(1,0,2)}-Q^{(0,1,2)} \Big)+R^{(0,0,2)}
    \Big( \Big(R^{(2,0,0)}-R^{(0,2,0)} \Big)
     \Big(P^{(0,1,1)}+Q^{(1,0,1)} \Big)+2
    \Big(-Q^{(2,0,0)}
    \Big(P^{(2,0,0)}+Q^{(1,1,0)} \Big)  -2 P^{(1,0,1)} R^{(1,1,0)}
+P^{(1,2,0)}+P^{(1,1,0)}
   P^{(2,0,0)}+P^{(3,0,0)}+Q^{(0,3,0)}-Q
   ^{(0,2,0)}
   Q^{(1,1,0)}   +Q^{(2,1,0)} \Big)+2 P^{(0,2,0)}
   \Big(P^{(1,1,0)}+Q^{(0,2,0)} \Big) \Big) \Big)+2 \Omega^2 R^{(0,0,2)}
   R^{(1,1,0)}.$}
\end{equation}
Consider the following two-parameter family of differential systems,
\begin{equation}\label{eq:0Hopfperturbado}
\begin{aligned}
\dot{x}=&-y+P(x,y,z),\\
\dot{y}=&x+Q(x,y,z),\\
\dot{z}=&R(x,y,z)+\varepsilon\mu z +\beta\varepsilon^2.
\end{aligned}
\end{equation}
Assume that $\ell_1\neq0$. Then, there exists a curve $\mu(\varepsilon)$, defined in a small neighborhood of $0$, with $\mu(0)=0$, such that for each sufficiently small $\varepsilon$, there exists an interval $I_{\varepsilon}\subset \Lambda$ containing $\mu(\varepsilon)$ with the following property: for every $\mu \in I_{\varepsilon}$ such that $(\mu - \mu(\varepsilon))\ell_1 < 0$, the differential system \eqref{eq:0Hopfperturbado} has a normally hyperbolic limit torus surrounding a periodic solution. Moreover, this limit torus bifurcates from the periodic solution as $\mu$ crosses $\mu(\varepsilon)$, while the periodic solution itself bifurcates from the origin as $\varepsilon$ crosses $0$.
\end{theorem}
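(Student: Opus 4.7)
The strategy is to reduce \eqref{eq:0Hopfperturbado} to a planar $2\pi$-periodic system in the standard averaging form \eqref{eq:Averaging} and then invoke Theorem \ref{Teo:InvariantToriNeimark}. First, I pass to cylindrical coordinates $(x,y)=(r\cos\theta,r\sin\theta)$, perform the blow-up $r=\varepsilon u$, $z=\varepsilon v$, and use $\theta$ as the new time variable, which is legitimate because $\dot\theta=1+O(\varepsilon)$ in any bounded region avoiding $u=0$. Since $P,Q,R$ have no constant or linear terms, the rescaling produces exactly one power of $\varepsilon$ in front of the nonlinearities, and after dividing by $\dot\theta$ and expanding we obtain
\begin{equation*}
\frac{d}{d\theta}\begin{pmatrix}u\\ v\end{pmatrix}=\varepsilon F_1(\theta,u,v;\mu)+\varepsilon^2 F_2(\theta,u,v;\mu)+O(\varepsilon^3),\qquad T=2\pi,
\end{equation*}
where $F_1$ is built from the quadratic parts of $P,Q,R$ together with the perturbation terms $\mu v+\beta$, and $F_2$ collects the cubic parts and quadratic-quadratic interactions.

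Using the first formula in \eqref{eq:averageformulas}, only $\theta$-independent contributions survive, and a direct integration yields
\begin{equation*}
\mathbf{f}_1(u,v;\mu)=\left(\pi\bigl(P^{(1,0,1)}+Q^{(0,1,1)}\bigr)uv,\ \tfrac{\pi}{2}\bigl(R^{(2,0,0)}+R^{(0,2,0)}\bigr)u^2+\pi R^{(0,0,2)}v^2+2\pi(\mu v+\beta)\right).
\end{equation*}
The nondegeneracy condition \eqref{eq:gencond} makes both $P^{(1,0,1)}+Q^{(0,1,1)}$ and $R^{(2,0,0)}+R^{(0,2,0)}$ nonzero with opposite signs, and solving $\mathbf{f}_1=0$ on the branch $v=0$ with the prescribed sign $\beta=-\mathrm{sign}(R^{(2,0,0)}+R^{(0,2,0)})$ produces the $\mu$-independent zero $\mathbf{x}_\mu=(u_0,0)$, where $u_0=2/\sqrt{|R^{(2,0,0)}+R^{(0,2,0)}|}$. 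At this point $D\mathbf{f}_1$ has trace $2\pi\mu$ and determinant $\pi^2 u_0^2\,\Omega>0$, so its eigenvalues are $\eta(\mu)\pm i\zeta(\mu)$ with $\eta(\mu)=\pi\mu$ and $\zeta(0)=\pi u_0\sqrt{\Omega}>0$. Consequently \textbf{H} holds with $\mu_0=0$ and \textbf{T} holds with $\alpha_d=\eta'(0)=\pi\neq0$.

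Next, I compute $\mathbf{f}_2$ from the second formula in \eqref{eq:averageformulas} (this is where the cubic Taylor coefficients of $P,Q,R$ enter), then shift by $\boldsymbol{\xi}(\mu,\varepsilon)$, apply the linear change of coordinates that puts $D_{\mathbf y}H_\varepsilon(0,0)$ into the Jordan canonical form required by Section \ref{Sec:Preliminary}, and read off the symmetric multilinear maps $B$ and $C$ in the cubic Taylor expansion of $H_\varepsilon(\mathbf y,0)$. Substituting into the Lyapunov coefficient formula \eqref{eq:Lyapunov}, expanding in $\varepsilon$, and simplifying with the identities $u_0^2=-4\beta/(R^{(2,0,0)}+R^{(0,2,0)})$ and $\Omega=-(P^{(1,0,1)}+Q^{(0,1,1)})(R^{(2,0,0)}+R^{(0,2,0)})$, the leading term should read $\ell_1^\varepsilon=\varepsilon\,c\,\ell_1+O(\varepsilon^2)$ for some explicit constant $c>0$, with $\ell_1$ as in \eqref{eq:Nh_Lyapunov}. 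Under the assumption $\ell_1\neq0$, this establishes \textbf{ND} with $j^\ast=1$ and $\mathrm{sign}\,\ell_{1,1}=\mathrm{sign}\,\ell_1$. Theorem \ref{Teo:InvariantToriNeimark}(2) then produces a normally hyperbolic invariant torus of the rescaled $(\theta,u,v)$-system surrounding the $2\pi$-periodic orbit emerging from $\mathbf{x}_\mu$, precisely for those $\mu\in I_\varepsilon$ such that $\alpha_d\ell_{1,1}(\mu-\mu(\varepsilon))<0$, i.e., $(\mu-\mu(\varepsilon))\ell_1<0$. Undoing the rescaling $r=\varepsilon u$, $z=\varepsilon v$ lifts this torus back to a normally hyperbolic limit torus of \eqref{eq:0Hopfperturbado} that encircles the small-amplitude periodic solution born from the origin at $\varepsilon=0$.

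The main obstacle is the last identification: checking that, after all the shifts, rescalings, and the Jordan-form change of basis, the leading coefficient produced by \eqref{eq:Lyapunov} coincides with \eqref{eq:Nh_Lyapunov} up to a positive factor. The computation mixes the cubic Taylor data of $P,Q,R$, the corrections from $\mathbf{f}_2$, the expansions of $\boldsymbol{\xi}(\mu,\varepsilon)$ and $\mu(\varepsilon)$, and a delicate Laurent-type expansion of the factor $1/(1-e^{i\theta_\varepsilon})$ in \eqref{eq:Lyapunov}, which is singular at $\varepsilon=0$ because the linear part of the Poincaré map collapses to the identity; in practice this step is best carried out with a computer algebra system, after which the general Theorem \ref{Teo:0Hopfbif} follows by the same recipe applied to the two-parameter family \eqref{eq:app0Hopfperturbado}.
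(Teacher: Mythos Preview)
Your overall strategy matches the paper's: rescale by $\varepsilon$, pass to cylindrical coordinates with $\theta$ as time, compute $\mathbf f_1,\mathbf f_2$, and verify \textbf{H}, \textbf{T}, \textbf{ND} for Theorem~\ref{Teo:InvariantToriNeimark}. Your $\mathbf f_1$ and the verification of \textbf{H} and \textbf{T} are correct. The gap is your claim that $\ell_1^{\varepsilon}=\varepsilon\,c\,\ell_1+O(\varepsilon^{2})$ with $j^{\ast}=1$. In fact $\ell_{1,1}\equiv 0$, and the paper obtains $\ell_1^{\varepsilon}=\dfrac{\pi\varepsilon^{2}}{16\Omega^{2}}\ell_1+O(\varepsilon^{3})$, so $j^{\ast}=2$. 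You can see in advance that $j^\ast=1$ is impossible: the expression \eqref{eq:Nh_Lyapunov} contains the cubic Taylor coefficients $P^{(3,0,0)}$, $R^{(0,0,3)}$, etc., which after the blow-up enter only through $\mathbf f_2$ and hence cannot contribute to $\ell_1^{\varepsilon}$ at order~$\varepsilon$. More concretely, since $\mathbf f_1$ is purely quadratic, the trilinear map $C$ in \eqref{eq:Lyapunov} starts at order $\varepsilon^{2}$, and the only $O(\varepsilon)$ contribution comes from the second summand via the Laurent pole of $1/(1-e^{i\theta_\varepsilon})$; using the structure $g^{1}=\alpha uv$, $g^{2}=\gamma_1 u^{2}+\gamma_2 v^{2}$ of the quadratic part of $\mathbf f_1$ after the $M_0$ change of basis, one checks that $\langle p,B_1(p,p)\rangle\langle p,B_1(p,\bar p)\rangle$ is real, so its imaginary part---which gives $\ell_{1,1}$---vanishes.

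Because of this, the computation must be carried to order $\varepsilon^{2}$, and this requires one ingredient you omitted: the $\varepsilon$-correction $M_1$ to the Jordan-form change of basis, $M=M_0+\varepsilon M_1$, chosen so that $D_{(u,v)}H_{\varepsilon}(0,0)=I+\varepsilon A_1+\varepsilon^{2}A_2+O(\varepsilon^{3})$ remains in Jordan form through second order (in the paper $A_2$ becomes the scalar matrix $-\tfrac{2\pi^{2}\Omega}{\Gamma^{2}}I$). Without $M_1$, formula \eqref{eq:Lyapunov} is not applicable at the order you need, and the identification of $\ell_{1,2}$ with a positive multiple of $\ell_1$ fails. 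Once this is fixed, the rest of your plan goes through exactly as in the paper.
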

\begin{proof}
After applying the rescaling $(x,y,z)\mapsto \varepsilon(x,y,z)$ to system \eqref{eq:0Hopfperturbado}, we obtain the rescaled system
\begin{equation}\label{eq:Nh0Hopfrescaled}
\begin{aligned}
\dot{x} &= -y + \varepsilon P^{(2)} + \varepsilon^2 P^{(3)} + O(\varepsilon^3),\\
\dot{y} &= x + \varepsilon Q^{(2)} + \varepsilon^2 Q^{(3)} + O(\varepsilon^3),\\
\dot{z} &= \varepsilon(R^{(2)} + \mu z + \beta) + \varepsilon^2 R^{(3)} + O(\varepsilon^3),
\end{aligned}
\end{equation}
where, for a $C^r$ function $F:\mathbb{R}^3\to\mathbb{R}$, we denote by $F^{(m)}$ the homogeneous term of degree $m$ in the Taylor expansion of $F$ about the origin.

Next, we apply the cylindrical change of variables, $(x,y,z) =( r \cos\theta, r \sin\theta,w)$, to system \eqref{eq:Nh0Hopfrescaled}. Since $\dot{\theta} = 1 + O(\varepsilon)$, we perform a time rescaling to take $\theta$ as the new independent variable, obtaining the system
\begin{equation*}
\begin{aligned}
r' = \frac{dr}{d\theta} &= \varepsilon F_1^1(\theta,r,w) + \varepsilon^2 F_2^1(\theta,r,w) + O(\varepsilon^3),\\
w' = \frac{dw}{d\theta} &= \varepsilon F_1^2(\theta,r,w) + \varepsilon^2 F_2^2(\theta,r,w) + O(\varepsilon^3),
\end{aligned}
\end{equation*}
which is in the standard form \eqref{eq:Averaging}. From here, we can use the formulae \eqref{eq:averageformulas} to compute the first and second order Melnikov functions $\mathbf{f}_1(r,w)$ and $\mathbf{f}_2(r,w)$.

By denoting $\mathbf{f}_1(r,w) = (\mathbf{f}_1^1, \mathbf{f}_1^2)$, we have
\begin{equation*}\label{eq:Nh_av1}
\begin{aligned}
\mathbf{f}_1^1(r,w) &= \pi \left(P^{(1,0,1)} + Q^{(0,1,1)}\right) r w,\\
\mathbf{f}_1^2(r,w) &= \frac{1}{2} \pi \left(4\beta + 4\mu w + (R^{(0,2,0)} + R^{(2,0,0)}) r^2 + 2 R^{(0,0,2)} w^2\right).
\end{aligned}
\end{equation*}
The explicit expression for $\mathbf{f}_2(r,w)$ is omitted for brevity

Let $\Gamma = \sqrt{|R^{(0,2,0)} + R^{(2,0,0)}|}$ and define $(r_0, w_0) = (2/\Gamma, 0)$. Then,
\begin{equation}\label{eq:Nh_hypothesisH}
\mathbf{f}_1(r_0, w_0) = 0,\quad D\mathbf{f}_1(r_0, w_0) = \begin{pmatrix}
0 & \displaystyle \frac{2\pi(Q^{(0,1,1)} + P^{(1,0,1)})}{\Gamma} \\
-2\pi\beta\Gamma & 2\pi\mu
\end{pmatrix}.
\end{equation}
Taking \eqref{eq:gencond} into account and, in addition, that $R^{(2,0,0)} + R^{(0,2,0)} = -\beta \Gamma^2$, we obtain 
\[
P^{(1,0,1)} + Q^{(0,1,1)}=\dfrac{\Omega}{\beta \Gamma^2}.
\]
Thus, the eigenvalues of $D\mathbf{f}_1(r_0,w_0)$ are given by
\begin{equation}\label{eq:Nh_hypothesisH2}
\lambda^\pm(\mu) = \pi\mu \pm \pi\frac{\sqrt{\mu^2 \Gamma^2 - 4\Omega}}{\Gamma},
\end{equation}
and, in particular, for $\mu = 0$, we obtain $\lambda^\pm(0) = \pm i 2\pi\sqrt{\Omega}/\Gamma$. Therefore, the imaginary part of $\lambda^\pm(\mu)$ is nonzero for $\mu \in J := (-2\sqrt{\Omega}/\Gamma,2\sqrt{\Omega}/\Gamma)$.

The Poincaré map associated with system \eqref{eq:Nh0Hopfrescaled} is then given by
\begin{equation}\label{eq:Nh_Poincare}
\Pi(r, w; \mu, \varepsilon) = (r, w) + \varepsilon \mathbf{f}_1(r,w;\mu) + \varepsilon^2 \mathbf{f}_2(r,w;\mu) + O(\varepsilon^3).
\end{equation}

We now verify conditions \textbf{H}, \textbf{T}, and \textbf{ND}. Condition \textbf{H} is satisfied by equations \eqref{eq:Nh_hypothesisH} and \eqref{eq:Nh_hypothesisH2}. In the notation of Section \ref{Sec:Preliminary}, we have:
\begin{equation}\label{eq:Nh_hypothesisT}
\mu_0 = 0,\quad \mathbf{x}_\mu = (r_0, w_0),\quad \eta(\mu) = \pi\mu,\quad i\zeta(\mu) = \pi\frac{\sqrt{\mu^2\Gamma^2 - 4\Omega}}{\Gamma},\quad \omega_0 = \frac{2\pi\sqrt{\Omega}}{\Gamma}.
\end{equation}
Therefore, condition \textbf{T} follows directly from \eqref{eq:Nh_hypothesisT}.

From conditions \textbf{H} and \textbf{T},  the Implicit Function Theorem provides functions
\[
\boldsymbol{\xi}(\mu,\varepsilon) = (r_0, w_0) + \varepsilon(\xi_1(\mu), \xi_2(\mu)) + O(\varepsilon^2),\quad \mu(\varepsilon) = \varepsilon\mu_1 + O(\varepsilon^2),
\]
satisfying  $\Pi(\boldsymbol{\xi}(\mu,\varepsilon), \mu, \varepsilon) = \boldsymbol{\xi}(\mu,\varepsilon)$ for sufficiently small $(\mu, \varepsilon)$. The explicit expressions for $\xi_1(\mu)$, $\xi_2(\mu)$, and $\mu_1$ are given by:
\begin{eqnarray*}
\xi_1(\mu)&=&\frac{1}{12 \beta  \Gamma ^3
   \Omega}\left(3 \beta ^2 \Gamma ^4 \mu 
   \left(P^{(0,1,1)}+Q^{(1,0,1)}\right)-2
   \beta  \Gamma  \left(4 \Omega \left(2
   \left(P^{(1,1,0)}+Q^{(0,2,0)}\right)+Q^{(2,
   0,0)}\right)\right.\right.\\
   &&+3 \Gamma  \mu 
   \left(-R^{(0,2,0)}
   \left(P^{(0,1,1)}+Q^{(1,0,1)}\right)+P^{(0,
   2,0)}
   \left(P^{(1,1,0)}+Q^{(0,2,0)}\right)\right.\\
   &&-Q^{(2,
   0,0)}
   \left(P^{(2,0,0)}+Q^{(1,1,0)}\right)-2
   P^{(1,0,1)}
   R^{(1,1,0)}+P^{(1,2,0)}+P^{(1,1,0)}
   P^{(2,0,0)}+P^{(3,0,0)}\\
   &&\left.\left.\left.+Q^{(0,3,0)}-
   Q^{(0,2,0)}
   Q^{(1,1,0)}+Q^{(2,1,0)}\right)\right)-6
   \Omega \mu  R^{(1,1,0)}\right),\\
\xi_2(\mu)&=&\frac{1}{4\Gamma^2\Omega} \left(\Gamma^2\beta 
   \left(\left(P^{(0,1,1)}+Q^{(1,0,1)}\right)
   \left(2 R^{(0,2,0)}+\beta  \Gamma ^2\right)-2   \left(P^{(0,2,0)}\left(P^{(1,1,0)}+Q^{(0,2,0)}\right)\right.\right.\right.\\
   &&-Q^{(2,
   0,0)}
   \left(P^{(2,0,0)}+Q^{(1,1,0)}\right)-2
   P^{(1,0,1)}
   R^{(1,1,0)}+P^{(1,2,0)}+P^{(1,1,0)}
   P^{(2,0,0)}+P^{(3,0,0)}\\
   &&\left.\left.\left.+Q^{(0,3,0)}-
   Q^{(0,2,0)}
   Q^{(1,1,0)}+Q^{(2,1,0)}\right)\right)-6\Omega R^{(1,1,0)}\right),\\
\mu_1&=&-\frac{1}{4 \beta  \Gamma ^4\Omega}\left(\beta ^3 \Gamma ^6 R^{(0,0,2)}
   \left(P^{(0,1,1)}+Q^{(1,0,1)}\right)+\beta
   ^2 \Gamma ^4 \left(\Omega
   \left(P^{(0,1,1)}+Q^{(1,0,1)}\right)\right.\right.\\
   &&-2 R^{(0,0,2)} \left(-R^{(0,2,0)}
   \left(P^{(0,1,1)}+Q^{(1,0,1)}\right)+P^{(0,
   2,0)}
   \left(P^{(1,1,0)}+Q^{(0,2,0)}\right)\right.\\
   &&-Q^{(2,0,0)}
   \left(P^{(2,0,0)}+Q^{(1,1,0)}\right)-2
   P^{(1,0,1)}
   R^{(1,1,0)}+P^{(1,2,0)}+P^{(1,1,0)}
   P^{(2,0,0)}+P^{(3,0,0)}\\
   &&\left.\left.+Q^{(0,3,0)}-
   Q^{(0,2,0)}
   Q^{(1,1,0)}+Q^{(2,1,0)}\right)\right)+2
   \beta  \Gamma ^2 \Omega \left(R^{(0,2,0)}
   \left(P^{(0,1,1)}+Q^{(1,0,1)}\right)\right.\\
   &&-Q^{(2,0,0)}
   \left(P^{(2,0,0)}+Q^{(1,1,0)}+2
   R^{(1,0,1)}\right)+P^{(0,2,0)}
   \left(P^{(1,1,0)}+Q^{(0,2,0)}\right)\\
   &&-R^{(1,
   1,0)} \left(R^{(0,0,2)}-2
   P^{(1,0,1)}\right)+2
   \left(P^{(0,2,0)}+P^{(2,0,0)}\right)
   R^{(0,1,1)}+P^{(1,2,0)}\\
   &&+P^{(1,1,0)}P^{(2,0,0)}+P^{(3,0,0)}-Q^{(0,2,0)}
   \left(Q^{(1,1,0)}+2
   R^{(1,0,1)}\right)+Q^{(0,3,0)}+Q^{(2,1,0)}\\
   &&\left.\left.+2 R^{(0,2,1)}+2 R^{(2,0,1)}\right)-2
   \Omega^2 R^{(1,1,0)}\right).
\end{eqnarray*}

Next, consider the change of coordinates defined by
\[
(r, w) = M \cdot (u, v) + \boldsymbol{\xi}(\mu, \varepsilon),
\]
with $\mu = \sigma + \mu(\varepsilon)$, where $M \in \mathbb{R}^{2 \times 2}$ is given by
\[
M = M_0 + \varepsilon M_1,
\]
and
\[
M_0 = \begin{pmatrix}
1 & 0 \\
0 & -\dfrac{\beta\Gamma^2}{\sqrt{\Omega}}
\end{pmatrix},\quad
M_1 = \begin{pmatrix}
0 & 0 \\
m_{21}^1 & m_{22}^1
\end{pmatrix}.
\]
The entries $m_{21}^1$ and $m_{22}^1$ are explicitly given by
\[
\begin{aligned}
m_{21}^1 &= -\frac{1}{4 \Gamma \Omega} \Big(
\beta^2 \Gamma^4 P^{(0,1,1)} + 2\beta \Gamma^2 P^{(0,2,0)} P^{(1,1,0)} + 2\beta \Gamma^2 P^{(1,2,0)} + 2\beta \Gamma^2 P^{(1,1,0)} P^{(2,0,0)} \\
&\quad + 2\beta \Gamma^2 P^{(3,0,0)} + 2\beta \Gamma^2 P^{(0,2,0)} Q^{(0,2,0)} - 2\beta \Gamma^2 P^{(2,0,0)} Q^{(2,0,0)} - 2\beta \Gamma^2 P^{(0,1,1)} R^{(0,2,0)} \\
&\quad - 4\beta \Gamma^2 P^{(1,0,1)} R^{(1,1,0)} + \beta^2 \Gamma^4 Q^{(1,0,1)} + 2\beta \Gamma^2 Q^{(0,3,0)} - 2\beta \Gamma^2 Q^{(0,2,0)} Q^{(1,1,0)} \\
&\quad - 2\beta \Gamma^2 Q^{(1,1,0)} Q^{(2,0,0)} + 2\beta \Gamma^2 Q^{(2,1,0)} - 2\beta \Gamma^2 Q^{(1,0,1)} R^{(0,2,0)} + 6\Omega R^{(1,1,0)} \Big), \\
m_{22}^1 &= \frac{2\beta\Gamma}{3\sqrt{\Omega}} \left( -2 P^{(1,1,0)} - 2 Q^{(0,2,0)} - Q^{(2,0,0)} + 3 R^{(0,1,1)} \right).
\end{aligned}
\]
We note that $M$ is nonsingular for sufficiently small $\varepsilon$, since $\det M=-\beta\Gamma^2/\sqrt{\Omega}+O(\varepsilon)$. Under this change of coordinates, the Poincaré map \eqref{eq:Nh_Poincare} transforms into
 \begin{equation}\label{eq:Nh_Hmap}
\begin{array}{rl}
H_\varepsilon(u,v,\sigma)=\Pi(u+\boldsymbol{\xi}_1(\sigma+\mu(\varepsilon),\varepsilon),v+\boldsymbol{\xi}_2(\sigma+\mu(\varepsilon),\varepsilon),\sigma+\mu(\varepsilon),\varepsilon)-\boldsymbol{\xi}(\sigma+\mu(\varepsilon),\varepsilon).
\end{array}
\end{equation}

We now compute the expansion of the Jacobian $D_{(u,v)}H_\varepsilon(0,0)$ around $\varepsilon = 0$. It takes the form
\[
D_{(u,v)}H_\varepsilon(0,0) = I + \varepsilon A_1 + \varepsilon^2 A_2 + O(\varepsilon^3),
\]
with
\[
A_1 = \begin{pmatrix}
0 & -\dfrac{2\pi\sqrt{\Omega}}{\Gamma} \\
\dfrac{2\pi\sqrt{\Omega}}{\Gamma} & 0
\end{pmatrix},\quad
A_2 = \begin{pmatrix}
-\dfrac{2\pi^2 \Omega}{\Gamma^2} & 0 \\
0 & -\dfrac{2\pi^2 \Omega}{\Gamma^2}
\end{pmatrix}.
\]
Since $Id+\varepsilon A_\varepsilon$ is in the Jordan normal form, we can apply the formula \eqref{eq:Lyapunov} to compute the first Lyapunov coefficient $\ell_1^\varepsilon$ associated to the map \eqref{eq:Nh_Hmap}, obtaining
\[
\ell_1^\varepsilon = \frac{\pi \varepsilon^2}{16 \Omega^2} \ell_1 + O(\varepsilon^3).
\]
Finally, since condition \textbf{ND} holds, the result follows from Theorem \ref{Teo:InvariantToriNeimark}
\end{proof}

\subsection{Proof of Theorem \ref{Teo:0Hopfbif}}

The proof of Theorem~\ref{Teo:0Hopfbif} follows the same strategy as the previous result. However, in this more general setting, the expressions involved become substantially more intricate, and are therefore omitted for the sake of brevity.

We note that the functions $U_i$, $V_i$, and $W_i$ (for $i = 1, 2, 3$) depend on the parameter $\mu$. To simplify the notation, this dependence will be suppressed throughout the proof.

We begin by applying the rescaling $(x,y,z) \mapsto \varepsilon(x,y,z)$, which transforms the system into
\begin{equation}\label{eq:appNh0Hopfrescaled}
\begin{array}{l}
\dot{x}=-y+\varepsilon\left(P^{(2)}+U_1^{(1)}+U_2^{(0)}\right)+\varepsilon^2\left(P^{(3)}+U_1^{(2)}+U_2^{(1)}+U_3^{(0)}\right)+O(\varepsilon^3),\\
\dot{y}=x+\varepsilon\left(Q^{(2)}+V_1^{(1)}+V_2^{(0)}\right)+\varepsilon^2\left(Q^{(3)}+V_1^{(2)}+V_2^{(1)}+V_3^{(0)}\right)+O(\varepsilon^3),\\
\dot{z}=\varepsilon\left(R^{(2)}+W_1^{(1)}+W_2^{(0)}\right)+\varepsilon^2\left(R^{(3)}+W_1^{(2)}+W_2^{(1)}+W_3^{(0)}\right)+O(\varepsilon^3).
\end{array}
\end{equation}

Next, we introduce cylindrical coordinates via the change of variables $(x,y,z) = (r \cos\theta, r \sin\theta, w)$. Since $\dot{\theta} = 1 + O(\varepsilon)$, we perform a time rescaling and take $\theta$ as the new independent variable, thereby reducing system \eqref{eq:appNh0Hopfrescaled} to the standard form~\eqref{eq:Averaging}.

We then compute the first two Melnikov functions, $\mathbf{f}_1(r,w)$ and $\mathbf{f}_2(r,w)$, using the formulas given in~\eqref{eq:averageformulas}. The expression for $\mathbf{f}_1(r,w) = (\mathbf{f}_1^1,\mathbf{f}_1^2)$ is given by
\begin{equation*}\label{eq:Nh_av1}
\begin{array}{l}
\mathbf{f}_1^1(r,w)=\pi  r \left((P^{(1,0,1)}+
   Q^{(0,1,1)}) w+U_1^{(1,0,0)}+V_1^{(
   0,1,0)}\right),\\
\mathbf{f}_1^2(r,w)=\frac{1}{2} \pi \left((R^{(0,2,0)}+
   R^{(2,0,0)})r^2+2R^{(0,0,2)}w^2+4 W_1^{(0,0,1)}w+4 W_2(0,0,0)\right),
\end{array}
\end{equation*}   
As in the previous proof, the expression for $\mathbf{f}_2(r,w)$ is omitted due to its length.

Now, assuming condition \eqref{eq:appcond1}, define
\[
r_{\mu}=\sqrt{-\Gamma(\mu)}\quad\text{and}\quad w_{\mu}=-\frac{U_1^{(1,0,0)}+V_1^{(0,1,0)}}{P^{(1,0,1)}+Q^{(0,1,1)}}.
\] 
Note that $\mathbf{f}_1(r_{\mu},w_{\mu}) = 0$, and the eigenvalues of the Jacobian matrix $D\mathbf{f}1(r_{\mu},w_{\mu})$ are given by
\begin{equation*}
\lambda^\pm(\mu)=\eta(\mu) \pm\sqrt{\eta(\mu)^2-\pi^2 \Omega r_{\mu}^2}.
\end{equation*}
In particular, for $\mu = \mu_0$, we have $\lambda^\pm(\mu_0) = \pm i\pi \sqrt{\Omega} r_{\mu_0}$. Let $J_{0} \subset \mathbb{R}$ be a small open interval containing $\mu_{0}$ such that $\eta(\mu)^2 - \pi^2 \Omega r_{\mu}^2 < 0$ for all $\mu \in J_0$.

The Poincaré map associated with system~\eqref{eq:appNh0Hopfrescaled} is then of the form
\begin{equation}\label{eq:appNh_Poincare}
\Pi(r, w; \mu, \varepsilon) = (r, w) + \varepsilon \mathbf{f}_1(r,w;\mu) + \varepsilon^2 \mathbf{f}_2(r,w;\mu) + O(\varepsilon^3).
\end{equation}

We now verify conditions \textbf{H}, \textbf{T}, and \textbf{ND}. From the above discussion, it is clear that conditions \textbf{H} and \textbf{T} are satisfied. In the notation of Section \ref{Sec:Preliminary}, we have
\begin{eqnarray*}\label{eq:appNh_Neimark1}
\mathbf{x}_\mu=(r_{\mu},w_{\mu}),\,\,\zeta(\mu)=\sqrt{\pi^2 \Omega r_{\mu}^2-\eta(\mu)^2},\,\, \text{ and }\,\,\omega_{\mu}=\pi\sqrt{\Omega} r_{\mu}.
\end{eqnarray*}
By the Implicit Function Theorem, conditions \textbf{H} and \textbf{T} guarantee the existence of functions 
\[
\boldsymbol{\xi}(\mu,\varepsilon) = (r_\mu,w_\mu)+ \varepsilon(\xi_1(\mu), \xi_2(\mu)) + O(\varepsilon^2)\quad\text{and} \quad \mu(\varepsilon) = \varepsilon\mu_1 + O(\varepsilon^2),
\]
satisfying  $\Pi(\boldsymbol{\xi}(\mu,\varepsilon), \mu, \varepsilon) = \boldsymbol{\xi}(\mu,\varepsilon)$ for sufficiently small $(\mu, \varepsilon)$. 
Although the expressions for $\xi_1(\mu)$, $\xi_2(\mu)$, and $\mu_1$ can be computed explicitly, they are omitted due to their length.

Next, consider the change of coordinates defined by
\[
(r, w) = M \cdot (u, v) + \boldsymbol{\xi}(\mu, \varepsilon),
\]
with $\mu = \sigma + \mu(\varepsilon)$, where $M \in \mathbb{R}^{2 \times 2}$ is given by
\[
M = M_0 + \varepsilon M_1,
\]
and
\[
M_0 = \begin{pmatrix}
1 & 0 \\
0 & -\dfrac{\beta\Gamma^2}{\sqrt{\Omega}}
\end{pmatrix},\quad
M_1 = \begin{pmatrix}
0 & 0 \\
m_{21}^1 & m_{22}^1
\end{pmatrix}.
\]
The entries $m_{21}^1$ and $m_{22}^1$ can be computed explicitly, but they are omitted due to their length.
We note that $M$ is nonsingular for sufficiently small $\varepsilon$, since $\det M=-\beta\Gamma^2/\sqrt{\Omega}+O(\varepsilon)$. 
The Poincaré map \eqref{eq:appNh_Poincare} is then transformed into 
\begin{equation*}\label{eq:appNh_H}
\begin{array}{rl}
H_\varepsilon(u,v,\sigma)=\Pi(u+\boldsymbol{\xi}_1(\sigma+\mu(\varepsilon),\varepsilon),v+\boldsymbol{\xi}_2(\sigma+\mu(\varepsilon),\varepsilon),\sigma+\mu(\varepsilon),\varepsilon)-\boldsymbol{\xi}(\sigma+\mu(\varepsilon),\varepsilon).
\end{array}
\end{equation*}

We now compute the expansion of the Jacobian $D_{(u,v)}H_\varepsilon(0,0)$ around $\varepsilon = 0$. It takes the form
\begin{equation}
D_{(u,v)}H_\varepsilon(0,0)=Id+\varepsilon A_1+\varepsilon^2A_2+O(\varepsilon^3),
\end{equation}
 with
\begin{equation*}
A_1=\left(
\begin{array}{cc}
 0 & -\pi  \sqrt{\Omega}  r_{\mu_0} \\
 \pi  \sqrt{\Omega} r_{\mu_0} & 0 \\
\end{array}
\right),\quad A_2=\left(
\begin{array}{cc}
\displaystyle -\frac{1}{2} \pi ^2 \Omega (r_{\mu_0})^2 & -\zeta_2 \\
\zeta_2 &\displaystyle -\frac{1}{2} \pi ^2 \Omega (r_{\mu_0})^2\\
\end{array}
\right),
\end{equation*}
where $\zeta_2$ is a cubic polynomial on the coefficients of the differential system \eqref{eq:app0Hopfperturbado}. 

Since $Id+\varepsilon A_\varepsilon$ is in the Jordan normal form, we can apply the formula \eqref{eq:Lyapunov} to compute the first Lyapunov coefficient $\ell_1^\varepsilon$ associated to the map \eqref{eq:appNh_H},  obtaining
$$\ell_1^\varepsilon=\frac{\pi\varepsilon^2}{16\Omega^2}\ell_1+O(\varepsilon^3),$$
where $\ell_1$ is given by the expression \eqref{eq:Nh_Lyapunov}. Since \textbf{ND} is satisfied, by Theorem \ref{Teo:InvariantToriNeimark}, the result follows. 

\subsection{Example}
The following two-parameter family of differential systems provides an illustrative example of the bifurcation described in Theorem~\ref{Teo:0Hopfbifsimples}:
\begin{equation}\label{eq:ex1}
\begin{aligned}
\dot{x} &= -y,\nonumber\\
\dot{y} &= x + y z,\\
\dot{z} &= -x^2 + xy + z^2 + \varepsilon \mu z + \varepsilon^2.\nonumber
\end{aligned}
\end{equation}
For this system, we have $\Omega = 2$ and $\ell_1 = -48$. Moreover, we compute
\[
\mu(\varepsilon) = \frac{3}{4}\varepsilon  + \mathcal{O}(\varepsilon^2).
\]
Figure \ref{Fig:ex1} shows numerical simulations of several trajectories of the system above, rescaled via $(x,y,z) \mapsto \varepsilon (x,y,z)$, for $(\mu, \varepsilon) = (0.05, 0.05)$.

\begin{figure}[ht]
\centering
 \begin{overpic}[width=1\linewidth]{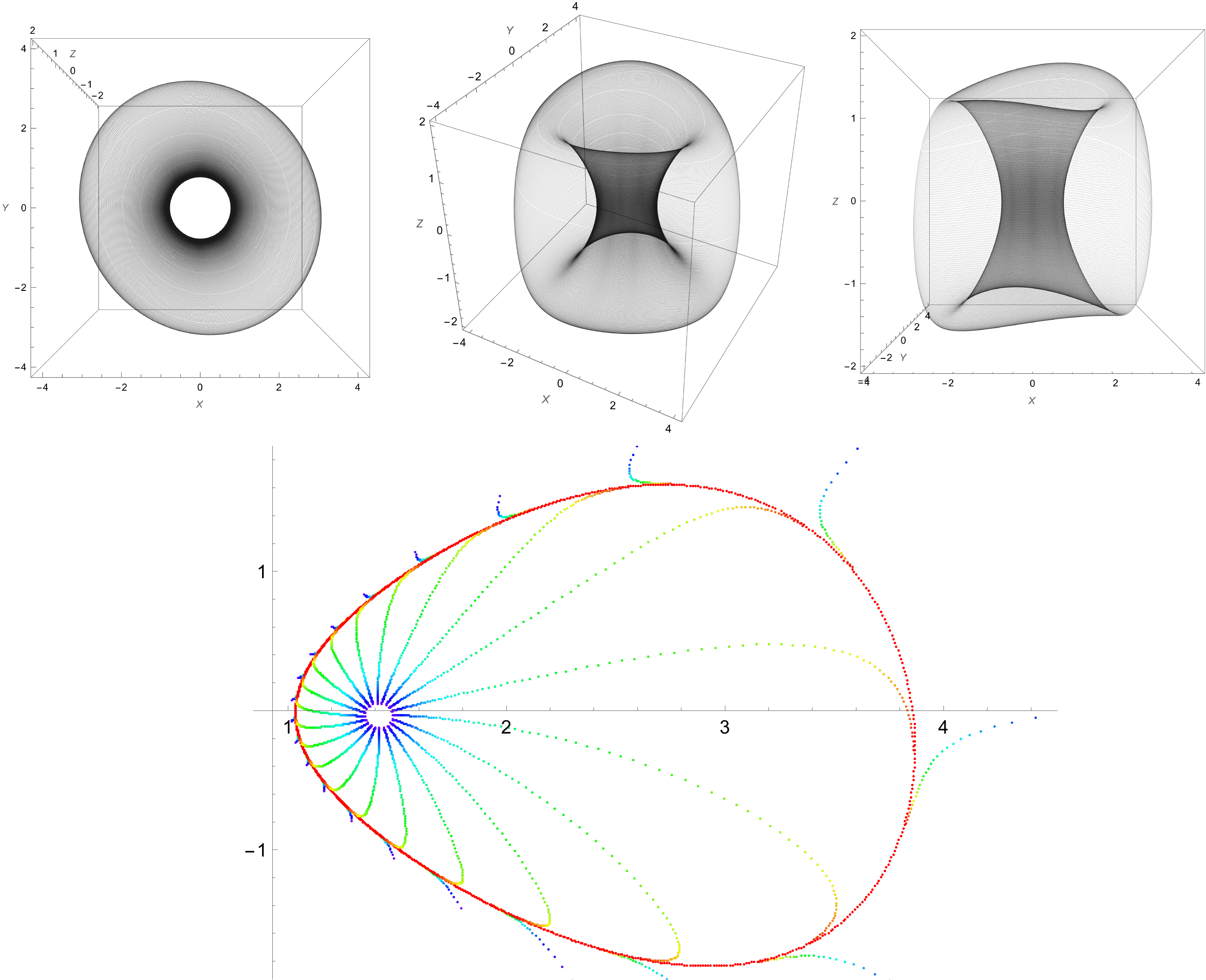}
    \end{overpic}
\bigskip
    \caption{Numerical simulations of several trajectories of system \eqref{eq:ex1} for $(\mu,\varepsilon) = (0.05, 0.05)$. The bottom panel shows the Poincaré map defined on the section $y = 0$, while the top panel shows some views of the phase portrait of \eqref{eq:ex1} rescaled via $(x,y,z) \mapsto \varepsilon(x,y,z)$.}
    \label{Fig:ex1}
\end{figure}

\section{Strict increase of $N_h(m)$: Proof of Theorem \ref{Teo:StrictIncrease}}\label{Sec:StrictIncrease}

This section is dedicated to the proof of Theorem \ref{Teo:StrictIncrease}. We begin with the following lemma, which extends \cite[Lemma 1]{SantanaGasull} to the higher dimensional setting and plays a key role in the proof.

\begin{lemma}\label{Lema:intersecB}
Let $X$ be an $n$-dimensional polynomial vector field of degree $m$, with $n \geq 2$ and $m \geq 1$, and let $B \subset \mathbb{R}^n$ be a closed ball centered at the origin. Then, there exist an arbitrarily small polynomial perturbation $\tilde{X}$ of $X$ of degree $m$ and a hyperplane $\Sigma$ such that $\tilde{X}$ has a regular point $p \in \mathbb{R}^n \setminus B$, the line $\{p + \lambda \tilde{X}(p) :\, \lambda \in \mathbb{R}\}$ is contained in $\Sigma$, and $\Sigma \cap B = \emptyset$.
\end{lemma}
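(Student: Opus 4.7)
The plan is to construct a small polynomial perturbation $\tilde X$ of $X$ together with a point $p$ far from the origin such that the line $L=\{p+\lambda\tilde X(p):\lambda\in\R\}$ stays at distance greater than the radius $r$ of $B$ from the origin. Once such a line $L$ is in hand, letting $q$ be the foot of the perpendicular from the origin to $L$, the affine hyperplane $\Sigma=\{x\in\R^n:\langle x,q\rangle=\|q\|^2\}$ contains $L$ (because $q\in L$ and the direction of $L$, namely $\tilde X(p)$, is orthogonal to $q$ by definition of the foot), has distance $\|q\|=\mathrm{dist}(0,L)$ from the origin, and therefore misses $B$.

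To produce $\tilde X$ and $p$, I would set
\[
\tilde X(x)=X(x)+\varepsilon\, x_{1}^{m}\, e_{2},
\]
with $\varepsilon\neq 0$ small and $\{e_1,\dots,e_n\}$ the canonical basis (using the convention $x_1^0=1$ when $m=0$). This is a polynomial vector field of degree at most $m$, arbitrarily close to $X$, whose leading homogeneous part $\tilde X_m$ evaluated at $e_1$ equals $X_m(e_1)+\varepsilon e_2$; for all but one value of $\varepsilon$, its $e_2$-component is nonzero, so $\tilde X_m(e_1)$ is not parallel to $e_1$. Now take $p=Re_1$ with $R$ large. The expansion $\tilde X(p)=R^m\tilde X_m(e_1)+O(R^{m-1})$ shows that $p$ is a regular point of $\tilde X$ for all sufficiently large $R$ and that the angle $\theta_R$ between $p$ and $\tilde X(p)$ converges, as $R\to\infty$, to the angle between $e_1$ and $\tilde X_m(e_1)$, which is neither $0$ nor $\pi$. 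Consequently $|\sin\theta_R|\geq c_0$ for some $c_0>0$ uniform in large $R$, and
\[
\mathrm{dist}(0,L)=\|p\|\,|\sin\theta_R|\geq c_0 R.
\]
Choosing $R$ large enough that both $R>r$ and $c_0R>r$ ensures $p\in\R^n\setminus B$ and $L\cap B=\emptyset$, after which $\Sigma$ is defined as above.

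The only delicate point is the reason behind the perturbation: without it, the pathological case in which $X_m(e_1)$ is parallel to $e_1$ would force $L$ to asymptotically point toward the origin along $p=Re_1$, and the scheme above would fail; adding $\varepsilon x_1^m e_2$ is precisely what breaks this potential collinearity at $e_1$, in a way compatible with the degree constraint and the smallness of the perturbation. Beyond that single step, the argument reduces to an elementary asymptotic computation and the standard fact that the hyperplane through the foot of the perpendicular from the origin to a line $L$, orthogonal to that foot, contains $L$ and lies at distance $\mathrm{dist}(0,L)$ from the origin. I do not anticipate any serious obstacle.
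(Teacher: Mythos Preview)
Your argument is correct and shares the paper's core idea: place $p=Re_1$ with $R$ large, perturb so that the leading homogeneous part $\tilde X_m(e_1)$ is not parallel to $e_1$, and conclude that the line $L$ through $p$ in direction $\tilde X(p)$ stays far from the origin. The executions differ in two minor respects. First, you use the single explicit perturbation $\varepsilon x_1^m e_2$ together with the direct estimate $\mathrm{dist}(0,L)=\|p\|\,|\sin\theta_R|$, whereas the paper perturbs twice---once to make the first two components restricted to the $x_1x_2$-plane coprime (invoking B\'ezout to control singularities), and once to force $f^{(m)}(1,0)g^{(m)}(1,0)\neq 0$---and then argues via the angle of the \emph{projected} line in the $x_1x_2$-plane; your route is more economical. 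Second, your $\Sigma$ is the hyperplane through the foot of the perpendicular from the origin to $L$, while the paper takes $\Sigma=\pi^{-1}(l_{x^*})$, a \emph{vertical} hyperplane containing the $e_3,\dots,e_n$ directions. That verticality is not required by the lemma as stated, but it is used immediately afterward in the proof of Theorem~\ref{Teo:StrictIncrease}; if you plan to feed your version into that argument, simply observe that your $\Sigma$ can also be taken vertical (replace $q$ by its projection to the $x_1x_2$-plane when defining $\Sigma$), since $|\sin\theta_R|$ already bounds from below the sine of the angle between $e_1$ and the projection of $\tilde X(p)$ to that plane.
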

\begin{proof}
Let $X = (X_1, \dots, X_n)$. By an arbitrarily small perturbation of $X$, we may assume that the polynomials
\[
f(x_1,x_2) := X_1(x_1, x_2, 0, \dots, 0) \quad \text{and} \quad g(x_1,x_2) := X_2(x_1, x_2, 0, \dots, 0)
\]
have no common factor. Then, by Bézout’s Theorem, the system $f = g = 0$ has only finitely many solutions in $\mathbb{R}^2$, so $X$ has only finitely many singularities in the $x_1x_2$-plane.

Let $f^{(m)}$ and $g^{(m)}$ denote the homogeneous components of degree $m$ of $f$ and $g$, respectively. After a further arbitrarily small perturbation, we may assume that $f^{(m)}(1, 0) g^{(m)}(1, 0) \neq 0$. Define $p_x := (x, 0, \dots, 0)$ with $x > 0$. Since the singularities of $X$ in the $x_1x_2$-plane are finite, there exists $x_0 > 0$ such that for all $x > x_0$, $p_x$ is a regular point of $\tilde{X}$.

Now consider the projection $\pi : \mathbb{R}^n \to \mathbb{R}^2$ onto the first two coordinates, and let $\pi(B) \subset \mathbb{R}^2$ be the projection of the ball $B$. For each $x > 0$, let $l_x^\pm$ be the two straight lines tangent to $\pi(B)$ passing through the point $(x, 0)$, and let $\theta^\pm(x)$ be the angles between $l_x^\pm$ and the $x_1$-axis.

Define the line
\[
l_x := \{ \pi\big(p_x + \lambda \tilde{X}(p_x)\big) : \lambda \in \mathbb{R} \} = \{ (x, 0) + \lambda \pi(\tilde{X}(p_x)) : \lambda \in \mathbb{R} \},
\]
and let $\varphi(x)$ be the angle between $l_x$ and the $x_1$-axis. As $x \to \infty$, we have
\[
\lim_{x \to \infty} \theta^\pm(x) = 0, \quad \text{and} \quad \lim_{x \to \infty} \varphi(x) = \lim_{x \to \infty} \arctan\left( \frac{g(x, 0)}{f(x, 0)} \right) = \arctan\left( \frac{g^{(m)}(1,0)}{f^{(m)}(1,0)} \right) \neq 0.
\]
Therefore, for $x$ sufficiently large, $|\varphi(x)| > |\theta^\pm(x)|$, which implies that the line $l_x$ does not intersect $\pi(B)$. Let $x^*$ be such a value.
Taking $p= p_{x^*}$ and $\Sigma=\pi^{-1}(l_{x^\ast})$, the result holds.
\end{proof}

\begin{figure}[ht]
\centering
		\begin{overpic}[width=0.7\linewidth]{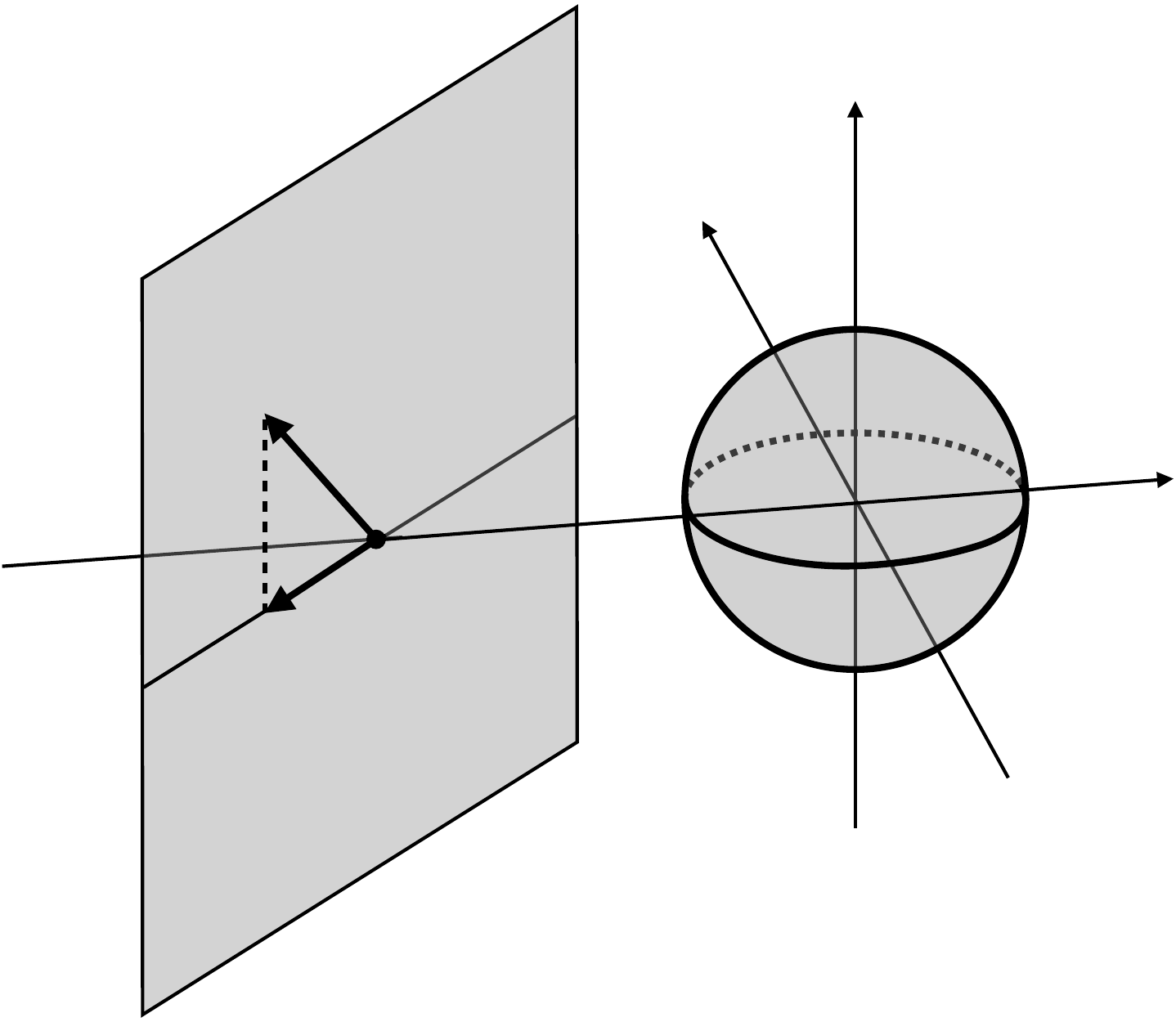}
			\put(32,38){$p$}
			\put(20,52.5){$\tilde{X}(p)$}
			\put(20,30){$\pi(\tilde{X}(p))$}
			\put(61,45.5){$\pi(B)$}
			\put(77,59){$B$}
			\put(13,60){$\Sigma=\pi^{-1}(l_{x^*})$}
			\put(101,46){$x_1$}
			\put(58,70){$x_2$}
			\put(71,80){$\mathbb{R}^{n-2}$}
		\end{overpic}
	\caption{Schematic of the proof of Lemma \ref{Lema:intersecB}.}
	\label{Fig:ex2}
\end{figure}

\subsection{Proof of Theorem \ref{Teo:StrictIncrease}}
Let $X$ be a three-dimensional polynomial vector field of degree $m$ with $N_h(m)$ normally hyperbolic limit tori. Let $B \subset \mathbb{R}^3$ be a closed ball centered at the origin that contains all such tori in its interior.  By Lemma \ref{Lema:intersecB} and Fenichel's Theorem~\cite{Fenichel1}, there exists an arbitrarily small perturbation $\tilde{X}$ of $X$, also of degree $m$, such that $\tilde{X}$ has $N_h(m)$ normally hyperbolic limit tori contained in the interior of $B$, and there exists a regular point $p \in \mathbb{R}^3 \setminus B$ such that the vertical plane $\Sigma$ contains the straight line $\{p + \lambda \tilde{X}(p):\,\lambda\in\mathbb{R}\}$ and satisfies $\Sigma \cap B = \emptyset$. By applying a translation, we may assume that $p$ is the origin, particularly, $\tilde{X}(0)\neq0$. In this case, $B$ is no longer centered at the origin, but the crucial fact is that the hyperplane $\Sigma$ still does not intersect $B$ and, consequently, avoids all $N_h(m)$ limit tori.

Let $\tilde{X}$ be the vector field associated to the system
\begin{equation*}
\tilde{X}:\left\{\begin{array}{ll}
		\dot{x} = \tilde{P}(x,y,z),\\
		\dot{y} = \tilde{Q}(x,y,z),\\
		\dot{z} = \tilde{R}(x,y,z),
	\end{array}\right.
\end{equation*}
and let $\Sigma$ be the plane defined by $a_0 x + b_0 y + c_0 z = 0$. Since $\Sigma$ is vertical and contains $\tilde{X}(0)$, it follows that
\[
a_0 = \tilde{Q}(0), \quad b_0 = -\tilde{P}(0), \quad \text{and} \quad c_0 = 0.
\]

Consider the vector field $X_{L,\delta}$ associated to the following two-parameter family of polynomial differential systems of degree $m+1$:
\begin{equation*}\label{eq:Xm+1}
X_{L,\delta}:\left\{\begin{array}{ll}
\dot{x} =\; & \left((a_0 + \delta a_1) x + b_0 y\right)\tilde{P}(x,y,z),\\
\dot{y}=\;& \left((a_0 + \delta a_2(L,\delta)) x + (b_0 + \delta b_2) y\right)\tilde{Q}(x,y,z),\\
\dot{z}=\;& (a_0 x + b_0 y)\tilde{R}(x,y,z),
\end{array}\right.
\end{equation*}
where
\[
a_1 = \frac{L}{\tilde{P}(0)}, \quad b_2 = -\frac{L}{\tilde{Q}(0)}, \quad \text{and} \quad a_2(L,\delta) = \frac{1 + 2\tilde{P}(0)\tilde{Q}(0)L + L^2\delta}{\tilde{P}(0)^2 \tilde{Q}(0)}.
\]
The parameters $\delta$ and $L$ are positive ant will be chosen later.

Note that $X_{L,0} = (a_0 x + b_0 y)\tilde{X}$, which coincides with $\tilde{X}$ on $\mathbb{R}^3 \setminus \Sigma$, up to time reparametrization. In particular, $X_{L,0}$ has $N_h(m)$ normally hyperbolic limit tori contained in the interior of $B$. By Fenichel's Theorem, for each fixed $L \in \mathbb{R}$, there exists $\tilde{\delta}(L) > 0$ such that for all $\delta \in (0, \tilde{\delta}(L))$, the vector field $X_{L,\delta}$ also exhibits $N_h(m)$ normally hyperbolic limit tori inside $B$.

Moreover, the characteristic polynomial of the Jacobian matrix $JX_{L,\delta}(0)$ is
\[
p(\lambda) = -\lambda^3 - \delta\lambda.
\]
Thus, for each $L\in\mathbb{R}$, there exists  $\bar{\delta}(L)\in(0,\tilde{\delta}(L)]$ such that the origin is a Hopf-Zero equilibrium of $X_{L,\delta}$ for every $\delta \in (0, \bar{\delta}(L))$.

Now, to apply Theorem \ref{Teo:0Hopfbifsimples} and produce an additional limit torus bifurcating from $X_{L,\delta}$, consider the time rescaling $\bar{t} = \sqrt{\delta} t$ and the linear change of variables $(x,y,z) = M(\bar{x}, \bar{y}, \bar{z})$, where
\[
M = \begin{pmatrix}
1 & 0 & 0 \\
\frac{L\delta + \tilde{P}(0)\tilde{Q}(0)}{\tilde{P}(0)^2} & \frac{\sqrt{\delta}}{\tilde{P}(0)^2} & 0 \\
\frac{\tilde{R}(0)}{\tilde{P}(0)} & -\frac{L\sqrt{\delta}\tilde{R}(0)}{\tilde{P}(0)} & 1
\end{pmatrix}.
\]
This transformation brings the linear part of $X_{L,\delta}$ into its Jordan normal form, yielding a new vector field $Y_{L,\delta}$ whose linear part is $(-y,x,0)$.

Let $\Omega(L,\delta)$ denote the expression defined in \eqref{eq:gencond} for the vector field $Y_{L,\delta}$. A direct computation shows that
\[
\Omega(L,\delta) = A(L) + \delta\, B(L,\delta),
\]
where $A(L)$ is quadratic in $L$, and $B(L,\delta)$ is quartic in $L$ and linear in $\delta$. Moreover, one can see that
\[
\lim_{L \to \infty} \dfrac{A(L)}{L^2} = \frac{2\tilde{R}(0)^2 \left( \tilde{Q}(0)\, \tilde{P}^{(0,0,1)} - \tilde{P}(0)\, \tilde{Q}^{(0,0,1)} \right)^2}{\tilde{P}(0)^2} > 0.
\]
It follows that there exists $L^* > 0$ sufficiently large such that $A(L^*) > 0$. Consequently, there exists $\delta^* \in (0, \bar{\delta}(L^*))$ for which $\Omega(L^*, \delta^*) > 0$, implying that condition \eqref{eq:gencond} holds for the vector field $Y_{L^*, \delta^*}$.

Let now $\ell_1$ denote the expression defined in \eqref{eq:Nh_Lyapunov} for $Y_{L^*, \delta^*}$. After a small perturbation, if necessary, we may assume that $\ell_1 \neq 0$. Then, by Theorem \ref{Teo:0Hopfbif}, there exists a polynomial vector field $Y$ of degree $m+1$, arbitrarily close to $Y_{L^*, \delta^*}$, having a normally hyperbolic limit torus arbitrarily close to the origin. Since, by Fenichel’s Theorem, $Y$ also preserves the original $N_h(m)$ normally hyperbolic limit tori inside $B$, we conclude that $Y$ has $N_h(m) + 1$ such tori. Hence, $N_h(m+1) \geq N_h(m) + 1$, which concludes this proof.

\section*{Acknowledgments}
L.Q. Arakaki is supported by S\~{a}o Paulo Research Foundation (FAPESP) grant 2024/06926-7.  D.D. Novaes is partially supported by S\~{a}o Paulo Research Foundation (FAPESP) grant 2018/13481-0, and by Conselho Nacional de Desenvolvimento Cient\'{i}fico e Tecnol\'{o}gico (CNPq) grant 309110/2021-1.

\section*{Declarations}
\subsection*{Ethical Approval} Not applicable
\subsection*{Competing interests} To the best of our knowledge, no conflict of interest, financial or other, exists.
\subsection*{Authors' contributions} All persons who meet authorship criteria are listed as authors, and all authors certify that they have participated sufficiently in the work to take public responsibility for the content, including participation in the conceptualization, methodology, formal analysis, investigation, writing-original draft preparation and writing-review \& editing.
\subsection*{Availability of data and materials} Data sharing not applicable to this article as no datasets were generated or analyzed during the current study.

\bibliographystyle{siam}
\bibliography{ReferenciasPD.bib}
\end{document}